\providecommand{\pgfsyspdfmark}[3]{}
 \definecolor{myred}{rgb}{0.86,0.1,0.16} 
\newif\ifitsdraft
\def\itsdraft{\global\itsdrafttrue}
\newtheorem{exe}{Example}
\newtheorem{prob}{Problem}
\newtheorem{corol}{Corollary}
\newtheorem{ass}{Assumption}
\newtheorem{defin}{Definition}
\newtheorem{cla}{Claim}
\newtheorem{rem}{Remark}
\newtheorem{lem}{Lemma}
\newtheorem{prop}{Proposition}
\newtheorem{thm}{Theorem}
\newtheorem{fct}{Fact}
\newenvironment{lemma}{\begin{lem}}{\hfill $\square$ \end{lem}}
\newenvironment{proposition}{\begin{prop}}{\hfill $\square$ \end{prop}}
\newenvironment{example}{\begin{exe}}{\hfill $\square$ \end{exe}}
\newenvironment{remark}{\begin{rem}}{\hfill $\bullet$ \end{rem}}
\newenvironment{assumption}{\begin{ass}}{\hfill $\bullet$ \end{ass}}
\newenvironment{theorem}{\begin{thm}}{\hfill $\square$ \end{thm}}
\newenvironment{definition}{\begin{defin}}{\hfill 
$\bullet$ \end{defin}}
\newenvironment{claim}{\begin{cla}}{\hfill $\bullet$ \end{cla}}
\begin{document}

\title{\LARGE \bf On a Strong Robust-Safety Notion for Differential Inclusions}

\author{Mohamed Maghenem \quad  Diana Karaki 
\thanks{ The authors are with University of Grenoble Alpes,  CNRS,  Grenoble INP,  France.  Email : mohamed.maghenem@gipsa-lab.fr.} 
}

\maketitle

\begin{abstract}
\blue{A dynamical system is strongly robustly safe provided that it remains safe in the presence of a continuous and positive perturbation, named robustness margin,  added to both the argument and the image of the right-hand side (the dynamics).   Therefore,  in comparison with existing robust-safety notions,  where the continuous and positive perturbation is added only to the  image of the right-hand side,  the proposed notion is shown to be relatively stronger in the context of set-valued right-hand sides.   Furthermore,  we distinguish between strong robust safety and \textit{uniform strong robust safety},  which requires the existence of a constant robustness margin.   The first part of the paper proposes sufficient conditions for strong robust safety in terms of barrier functions.   The proposed conditions  involve only the barrier function and the system's right-hand side.   Furthermore,  we establish the equivalence between strong robust safety and the existence of a smooth barrier certificate.  The second part of the paper proposes scenarios, under which,  strong robust safety implies uniform strong robust safety.  Finally,  we propose  sufficient conditions for the latter notion  in terms of barrier functions. }
\end{abstract}   
            
\section{introduction}    

Safety is one of the most important properties to ensure for a dynamical system, as it requires  the solutions starting from a given set of initial conditions to never reach a given unsafe region  \cite{prajna2007framework}.  
Depending on the considered application, reaching the unsafe set can correspond to the impossibility of applying a predefined feedback law due to saturation or loss of fidelity in the model,  or simply,  colliding with an obstacle or another system.  Ensuring safety is in fact a key step in many engineering  applications such as traffic regulation \cite{ersal2020connected},  aerospace \cite{9656550},  and human-robot interactions \cite{9788028}. 

\subsection{Motivation}

In this work, we consider the case where the dynamical system is given by the differential inclusion 
\begin{align} \label{eq1}
\Sigma : \quad  \dot x \in F(x) \qquad x \in \mathbb{R}^n. 
\end{align}
Furthermore,  we introduce two subsets $(X_o,X_u) \subset \mathbb{R}^n \times \mathbb{R}^n$,  where $X_o$ represents the subset of initial conditions and $X_u$ represents the unsafe set.  Hence, we necessarily have $X_o \cap X_u = \emptyset$. In the following we recall the definition of safety as in classical literature \cite{prajna2004safety, prajna2007framework}. 
\begin{definition} [Safety] 
System $\Sigma$ is safe with respect to $(X_o, X_u)$ if, for each solution $\phi$ starting from  $x_o \in X_o$, we have $\phi(t) \in \mathbb{R}^n \backslash X_u$ for all $t \in \dom \phi$.
\end{definition}
It is worthnoting that we usually show safety by showing the existence of a subset $K \subset \mathbb{R}^n$, with $X_o \subset K$ and $K \cap X_u = \emptyset$, that is  \textit{forward invariant}; namely,  the solutions to $\Sigma$ starting from $K$ remain in $K$ \cite{taly2009deductive}. 

Since the safety notion is not robust in nature,  inspired by the works in \cite{prajna2007framework,ratschan2018converse,wisniewski2016converse}, the following robust and uniform robust safety notions are introduced  in \cite{RubSafPI} and \cite{RubSafPII}.  
\begin{definition} [Robust safety] \label{def-RegRobSafe} 
System $\Sigma$ is robustly safe with respect to $(X_o, X_u)$ if there exists a continuous function $\varepsilon : \mathbb{R}^n \rightarrow \mathbb{R}_{>0}$ such that the system $\Sigma_{\varepsilon}$
given by 
\begin{align} \label{eq2}
\Sigma_\varepsilon : ~ \dot{x} \in F(x) + \varepsilon(x) \mathbb{B}  \qquad  x \in \mathbb{R}^n,
\end{align}
where \blue{$\mathbb{B}$ is the closed unite ball centered at the origin,  whose dimension is understood from the context},
is safe with respect to $(X_o,X_u)$. 
Such a function $\varepsilon$ is called 
\textit{robust-safety margin}. 
Furthermore,  $\Sigma$ is uniformly robustly safe with respect to $(X_o, X_u)$ if it admits a constant robust-safety margin. 
\end{definition}
By definition,  uniform robust safety implies robust safety,  which in turn,  implies safety. The opposite directions are not always true; we refer the reader to  \cite[Examples 1 and 2]{9683684}.

\subsection{Background}

To analyze safety and robust safety without computing solutions, barrier functions have been used intensively in recent years,  and different types of conditions \textcolor{blue}{are derived; see    \cite{ratschan2018converse, wisniewski2016converse,  liu2020converse, JANKOVIC2018359,seiler2021control,  9636670}}.  The barrier functions of interest in this work are scalar functions with distinct signs on $X_o$ and $X_u$.  Any scalar function verifying such a property is called \textit{barrier function candidate}. 
\begin{definition}[Barrier function candidate]
A function $B : \mathbb{R}^n \rightarrow \mathbb{R}$ is a barrier function candidate with respect to $(X_o,X_u)$ if
\begin{equation}
\label{eq.3}
\begin{aligned} 
 B(x) > 0 \quad \forall x \in X_u \quad \text{and} \quad 
 B(x) \leq 0 \quad \forall x \in  X_o.  
\end{aligned}
\end{equation}
\end{definition}
A barrier function candidate $B$ defines the zero-sublevel set
\begin{align} \label{eq.4} 
K := \left\{x \in \mathbb{R}^n : B(x) \leq 0 \right\}.
\end{align} 
Note that the set $K$ is closed provided that $B$ is at least lower semicontinuous,  $X_o \subset K$,  and $K \cap X_u = \emptyset$.  As a result, we can show safety and robust safety by, respectively, showing:
\begin{itemize}
\item Forward invariance of the set $K$ for $\Sigma$.
\item Forward invariance of the set $K$ for $\Sigma_\varepsilon$. 
\end{itemize}
When $B$ is at least continuous, the set $K$ is forward invariant for $\Sigma$ if \cite{9705088}
\begin{enumerate} 
[label={($\star$)},leftmargin=*]
\item \label{item:star1} 
\blue{There exists $U(K)$,  an open neighborhood of the set $K$,  such that,  along each solution $\phi$ to $\Sigma$ starting from $U(K) \backslash K$ and remaining in $U(K) \backslash K$\footnote{Note that $\phi$ is not necessarily a maximal solution,  it can be any solution whose range  is subset  of $U(K) \backslash K$.},   the map $t \mapsto B(\phi(t))$ is non-increasing.}
\end{enumerate}
The monotonicity condition \ref{item:star1}  can be characterized using infinitesimal conditions depending on the regularity of $B$.  In particular, when $B$ is continuously differentiable, \ref{item:star1} holds 
provided that     
\begin{align} \label{eqsafe}
\langle \nabla B (x), \eta \rangle \leq 0 \qquad \forall (\eta,x) \in (F(x), U(K)\backslash K).
\end{align} 
Note that condition \eqref{eqsafe} certifies robust safety if we replace $F(x)$ therein  by $F(x) + \varepsilon(x) \mathbb{B}$,   for a continuous and positive function $\varepsilon$ to be found.  Nonetheless,  it is more interesting to characterize robust safety without searching for a robustness margin, i.e., using a condition that does not involve $\varepsilon$.  As a result,  the following condition is used in \cite{ratschan2018converse, wisniewski2016converse,9444774}. 
\begin{align} \label{eqrsafe}
\langle \nabla B (x), \eta \rangle < 0 \qquad \forall (\eta,x) \in (F(x), 
\partial K).
\end{align} 
See also \cite{RubSafPI} for other possible characterizations of robust and 
uniform robust safety.   

Note that many works use numerical tools  such as SOS and learning-based algorithms to certify safety by numerically searching an appropriate barrier certificate; see \cite{DAI201762,10.1007/978-3-642-39799-8_17,9303785}.  Hence, it is natural to raise the following questions:
\\
\textbf{Converse safety:} Given a safe system $\Sigma$,    
\begin{itemize}
\item is there a continuous barrier candidate $B$ satisfying \ref{item:star1}? 
\item is there a continuously differentiable barrier function candidate $B$ satisfying \eqref{eqsafe}?
\end{itemize}
\textbf{Converse robust safety:} Given a robustly-safe system $\Sigma$, is there a continuously differentiable barrier function candidate $B$ satisfying \eqref{eqrsafe}?

The converse safety problem has been studied in  \cite{prajna2005necessity, konda2019characterizing,9705088},  where the answer is shown to be negative unless extra assumptions on the solutions or on the sets $(X_o, X_u)$ are made,  or time-varying barrier functions are used;  see the discussion in the latter reference. 
Whereas,  the converse robust-safety problem is studied in \cite{wisniewski2016converse}, \cite{ratschan2018converse}, \cite{9444774}, and \cite{RubSafPII}. To the best of our knowledge,  the latter reference proposed the following least restrictive assumptions for the answer to be positive.
\begin{assumption} \label{ass2} 
The map $F$ is continuous with nonempty,  convex,  and compact images,  and $\cl(X_o) \cap \cl (X_u) = \emptyset$. 
\end{assumption}
\blue{As an application,   this converse result is used in  \cite{RubSafPII} to show the existence of a self-triggered control strategy,  with inter-event times larger than a constant,  such that  the self-triggered closed-loop system is safe,  provided that  the continuous implementation of the controller guarantees robust safety.  }

\subsection{Contributions}  
 
In this paper, we introduce a new robust-safety notion, called strong robust safety.  This notion is stronger than the robust-safety notion used in existing literature. 
In particular,  strong robust safety for $\Sigma$ corresponds to safety for a perturbed version of $\Sigma$, denoted $\Sigma^s_\varepsilon$.  
Different from $\Sigma_\varepsilon$, the perturbation term $\varepsilon$ in $\Sigma^s_\varepsilon$ is added both to the argument and the image of the map $F$; thus,  the right-hand side of $\Sigma^s_\varepsilon$ is \textcolor{blue}{$\co \{ F(x + \varepsilon(x) \mathbb{B}) \} + \varepsilon(x) \mathbb{B}$, where $\co$ denotes the convex hull, which we use to avoid well-posedness issues}.  Furthermore,  when $\Sigma^s_\varepsilon$ is safe with $\varepsilon$ constant,  we recover the uniform strong robust-safety notion.   \blue{Note that robustness with respect to sufficiently-small perturbations affecting both the argument and the image of the dynamics is well studied in the context of asymptotic stability; see  \cite{COCV_2000__5__313_0} and \cite{goebel2012hybrid}.  Furthermore,   several applications,  where guaranteeing robustness with respect to sufficiently small perturbations is useful,  are presented in  \cite{RubSafPI}.  Those example are reminiscent from the contexts of 
self-triggered control, singularly perturbed systems,  and assume-guarantee contracts.  }

A motivation to study strong robust safety comes from the context of control loops subject to measurement and actuation noises. That is,  without loss of generality,  given a control system of the form 
$$ \Sigma_u : ~ \dot{x} = u \qquad x \in \mathbb{R}^n $$ 
and a feedback law $u \in F(x)$.  We can see that the nominal closed-loop system has the form of $\Sigma$.  As shown in Figure \ref{fig1},  having the nominal closed-loop system strongly robustly safe implies that the control loop tolerates small perturbations in both sensing and actuation.  Whereas,  classical robust safety induces that the control loop tolerates perturbations in actuation only.
\begin{figure}[h]
    \centering
    \includegraphics[width=0.4\textwidth]{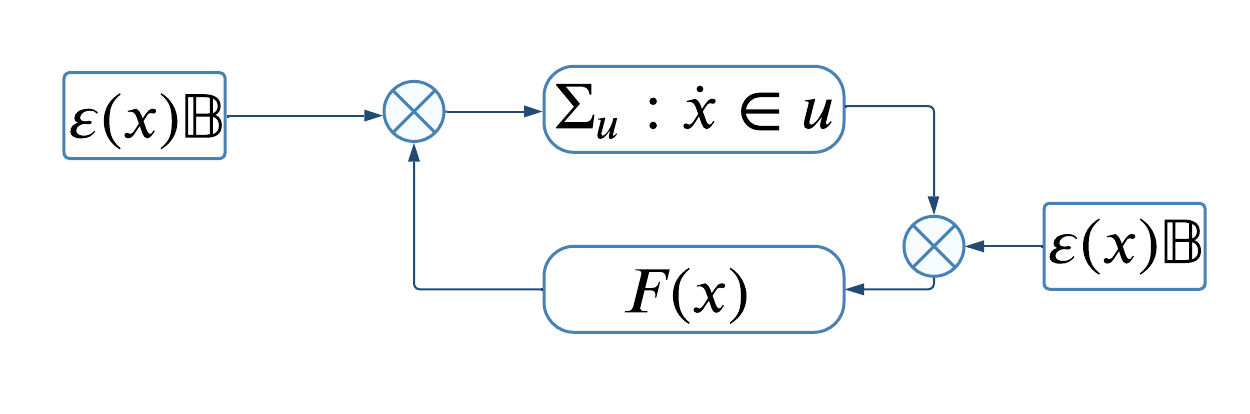}
    \caption{A control loop subject to measurement and actuation noises}
    \label{fig1}
\end{figure}
Clearly,  uniform strong robust safety implies strong robust safety, which implies robust safety. However, as we show via  counterexamples, the opposite directions are not always true.

  Following the spirit of \cite{RubSafPI} and \cite{RubSafPII},  the main contributions of the paper can be divided into two parts.   \blue{In the first part,   we show that, under mild regularity assumptions on $F$, the sufficient conditions for robust safety in \cite[Theorem 1]{RubSafPI} are strong enough to guarantee strong robust safety.   Furthermore,  we are able to establish the equivalence between strong robust safety and the existence of a barrier functions candidate $B$ satisfying \eqref{eqrsafe} under very mild assumptions on 
$\Sigma$ and the sets $(X_o,X_u)$.  Different from the converse robust-safety theorem in \cite{RubSafPII},   the proposed converse strong robust safety theorem does not require $F$ to be continuous.  
In the second part of the paper,  we study uniform strong robust safety.  That is,  as we show via a counterexample,   the sufficient conditions for uniform robust safety in \cite{RubSafPI}  are not strong enough to guarantee uniform strong robust safety.  As a result,  a set of sufficient conditions for uniform strong robust safety is derived based on the smoothness of $B$ and $F$,  and depending on whether $X_o$ and $X_u$ are bounded or not.  Those sufficient conditions are more restrictive than those certifying robust safety and uniform robust safety,   which is natural as we want to ensure a relatively stronger property.}

\textcolor{blue}{The rest of the paper is organized as follows.  Preliminaries are in Section \ref{sec.2}.   The problem formulation is in Section \ref{sec.03}.  
The characterization of strong robust safety is in Section \ref{sec.4}.  Finally,  sufficient conditions for uniform  strong robust safety are  in Section \ref{sec.5}. }

\textbf{Notation.}  For $x$, $y \in \mathbb{R}^n$, $x^{\top}$ denotes the transpose of $x$, $|x|$ the Euclidean norm of $x$,  and $\langle x, y \rangle:= x^\top y $ the inner product between $x$ and $y$.  For a set $K \subset \mathbb{R}^n$, we use $\mbox{int}(K)$ to denote its interior, $\partial K$ its boundary,  $U(K)$ any open neighborhood around $K$,  \blue{and  $|x|_K := \inf \{ |x-y| : y \in K \}$ to denote the distance between $x$ and the set $K$}.   For $O \subset \mathbb{R}^n$,  $K \backslash O$ denotes the subset of elements of $K$ that are not in $O$ and $|O-K|_H$ denotes the Hausdorff distance between $O$ and $K$.     For a function $\phi : \mathbb{R}^n \rightarrow \mathbb{R}^m$, $\dom \phi$ denotes the domain of definition of $\phi$.  By $F : \mathbb{R}^m \rightrightarrows \mathbb{R}^n $, we denote a set-valued map associating each element $x \in \mathbb{R}^m$ into a subset $F(x) \subset \mathbb{R}^n$.  For a set $D \subset \mathbb{R}^m$, $F(D) := \{ \eta \in F(x) : x \in D \}$.  By $\graph (F) \subset \mathbb{R}^{mn}$,  we denote the graph of $F$.   For a differentiable map $x \mapsto B(x) \in \mathbb{R}$,  $\nabla_{x_i} B$ denotes the gradient of $B$ with respect to $x_i$,  $i \in \{1,2,...,n\}$, and $\nabla B$ denotes the gradient of $B$ with respect to $x$.   

\section{Preliminaries} \label{sec.2}

\subsection{Set-Valued vs Single-Valued Maps} \label{sec.1a}

Let a set-valued map $F: K \rightrightarrows \mathbb{R}^n$, where $K \subset \mathbb{R}^m$. 

\begin{itemize}
\item The map $F$ is \textit{outer semicontinuous} at $x \in K$ if,  for every sequence $\left\{x_i\right\}^{\infty}_{i=0} \subset K$ and for every sequence  $\left\{ y_i \right\}^{\infty}_{i=0} \subset \mathbb{R}^n$ with $\lim_{i \rightarrow \infty} x_i = x$, $\lim_{i \rightarrow \infty} y_i = y \in \mathbb{R}^n$, and $y_i \in F(x_i)$ for all $i \in \mathbb{N}$, we have $y \in F(x)$;  see \cite{rockafellar2009variational}. 

\item 
\blue{ The map $F$ is  \textit{lower semicontinuous}  at $x \in K$ if,  for each $y \in F(x)$ and for each sequence  $\left\{x_i\right\}^{\infty}_{i=0} \subset K$ converging to $x$, there exists a sequence $\left\{y_i\right\}^{\infty}_{i=0}$,  with $y_i \in F(x_i)$ for all $i \in \mathbb{N}$, that converges to $y$; see  \cite[Definition 1.4.2]{aubin2009set}.  }

\item The map $F$ is \textit{upper semicontinuous} at $x \in K$ if,  for each $\varepsilon > 0$,  there exists a neighborhood of $x$, denoted $U(x)$,  such that for each $y \in U(x) \cap K$, $F(y) \subset F(x) + \varepsilon \mathbb{B}$; see \cite[Definition 1.4.1]{aubin2009set}.
\item The map $F$ is said to be \textit{continuous} at $x \in K$ if it is both upper and lower semicontinuous at $x$.

\item The map $F$ is \textit{locally bounded} at $x \in K$,  if there exists a neighborhood of $x$, denoted $U(x)$, and $\beta > 0$ such that  $|\zeta| \leq \beta$ for all $\zeta \in F(y)$ and for all $y \in U(x) \cap K$.  
\end{itemize}

Furthermore,  the map $F$ is upper, lower, outer semicontinuous,  continuous, or locally bounded if, respectively,  so it is for all $x \in K$. 

Let a single-valued map $B: K \rightarrow \mathbb{R}$,  where $K \subset \mathbb{R}^m$. 

\begin{itemize}
\item  $B$ is \textit{lower semicontinuous} at $x \in K$ if, for every sequence $\left\{ x_i \right\}_{i=0}^{\infty} \subset K$ such that $\lim_{i \rightarrow \infty} x_i = x$, we have $\liminf_{i \rightarrow \infty} B(x_i) \geq B(x)$. 
\item  $B$ is \textit{upper semicontinuous} at $x \in K$ if, for every sequence $\left\{ x_i \right\}_{i=0}^{\infty} \subset K$ such that $\lim_{i \rightarrow \infty} x_i = x$, we have $\limsup_{i \rightarrow \infty} B(x_i) \leq B(x)$.  
\item  $B$ is \textit{continuous} at $x \in K$ if it is both upper and lower semicontinuous at $x$. 
\end{itemize}

Furthermore, $B$ is upper, lower semicontinuous, or continuous if, respectively, so it is for all $x \in K$.

\subsection{Differential 
Inclusions: Concept of Solutions and Well-Posedness} 
\label{sec.03}

Consider the differential inclusion $\Sigma$ in \eqref{eq1}, with the right-hand side $F:\mathbb{R}^n \rightrightarrows \mathbb{R}^n$ being a set-valued map
 \cite{aubin2012differential}.

\begin{definition} [Concept of solutions] \label{def.ConSol}
 $\phi : \dom \phi \rightarrow \mathbb{R}^n$, where $\dom \phi$ is of the form $[0,T]$ or $[0,T)$ for some $T \in \mathbb{R}_{\geq 0} \cup \{+ \infty\}$, is a solution to $\Sigma$ starting from $x_o \in \mathbb{R}^n$ if $\phi(0) = x_o$,  the map $t \mapsto \phi(t)$ is locally absolutely continuous,  and
$\dot{\phi}(t) \in F(\phi(t))$ for almost all $t \in \dom \phi$.
\end{definition}

 A solution $\phi$ to $\Sigma$ starting from $x_o \in \mathbb{R}^n$ is forward complete if $\dom \phi$ is unbounded. It is maximal if there is no solution $\psi$ to $\Sigma$ starting from $x_o$ such that $\psi(t) = \phi(t)$ for all $t \in \dom \phi$ and $\dom \phi$ strictly included in $\dom \psi$. 

\begin{remark}
We say that $\phi : \dom \phi \rightarrow \mathbb{R}^n$ is a backward solution to $\Sigma$ if $\dom \phi$ is of the form $[-T,0]$ or $(-T,0]$ for some $T \in \mathbb{R}_{\geq 0} \cup \{+ \infty\}$ and $\phi(-(\cdot)) : - \dom \phi \rightarrow \mathbb{R}^n$ is \blue{a solution to $\Sigma^- : \dot{x} \in - F(x)$}. 
 \end{remark}

The differential inclusion $\Sigma$ is said to be well posed if the right-hand side $F$ satisfies the following assumption.

\begin{assumption} \label{ass1} 
The map $F: \mathbb{R}^n \rightrightarrows \mathbb{R}^n $ is upper semicontinuous and $F(x)$ is nonempty, compact, and convex for all $x \in \mathbb{R}^n$.
\end{assumption}

Assumption \ref{ass1} guarantees the existence of solutions from any $x \in \mathbb{R}^n$,  as well as adequate structural properties for the set of solutions to $\Sigma$;  see \cite{aubin2012differential, Aubin:1991:VT:120830}.  Furthermore, when $F$ is single valued, Assumption \ref{ass1} reduces to just continuity of $F$.

\begin{remark}
We recall that having $F$ upper semicontinuous with compact images is equivalent to having  $F$ is outer semicontinuous and  locally bounded; see \cite[Theorem 5.19]{rockafellar2009variational} and  \cite[Lemma 5.15]{goebel2012hybrid}.
\end{remark}

\section{Problem Formulation} \label{sec.3}

Given the differential inclusion $\Sigma$, we introduce its ``strongly'' perturbed  version $\Sigma^s_\varepsilon$ given by 
\textcolor{blue}{\begin{align} \label{eq3}
\Sigma^s_\varepsilon : ~ \dot{x} \in \co \{ F(x + \varepsilon(x) \mathbb{B}) \} + \varepsilon(x) \mathbb{B}  \qquad  x \in \mathbb{R}^n. 
\end{align}}
Next, we introduce the proposed strong robust-safety notion.
\begin{definition} [Strong robust safety] \label{def-RobSafe0} 
System $\Sigma$ is said to be strongly robustly safe with respect to $(X_o, X_u)$ if there exists a continuous function $\varepsilon : \mathbb{R}^n \rightarrow \mathbb{R}_{>0}$ such that $\Sigma^s_{\varepsilon}$ is safe with respect to $(X_o,X_u)$.  Such a function $\varepsilon$ is called a \textit{strong robust-safety margin}. 
\end{definition} 
We also introduce the uniform strong robust-safety notion. 

\begin{definition} [Uniform strong robust safety] \label{def-RobSafe1}
System $\Sigma$ is said to be uniformly strongly robustly safe with respect to $(X_o, X_u)$ if it is strongly robustly safe and admits a constant strong robust-safety margin.
\end{definition}

Clearly,  strong robust safety implies robust safety. However,  as we show in the following example, the opposite is not always true.  

\begin{example} \label{exp1}
Consider the system $\Sigma$ with $n = 1$ and 
\begin{equation*}
\begin{aligned}
F(x) :=
\begin{cases}
2 & \text{if} \qquad x \leq 0 \\
[-1,2] & \text{if} \qquad x=0 \\
-1  & \text{if} \qquad x \geq 0.
\end{cases}
\end{aligned}
\end{equation*}
Consider the sets (see Fig.  \ref{FigFigo1})
\begin{align*}
X_o := \{x \in \mathbb{R} : x \in [-2,0] \} \quad \text{and} \quad 
X_u :=  \mathbb{R} \backslash X_o.
\end{align*}

\begin{figure}
\begin{center}
  \def\svgwidth{0.7\columnwidth}
\begingroup%
  \makeatletter%
  \providecommand\color[2][]{%
    \errmessage{(Inkscape) Color is used for the text in Inkscape, but the package 'color.sty' is not loaded}%
    \renewcommand\color[2][]{}%
  }%
  \providecommand\transparent[1]{%
    \errmessage{(Inkscape) Transparency is used (non-zero) for the text in Inkscape, but the package 'transparent.sty' is not loaded}%
    \renewcommand\transparent[1]{}%
  }%
  \providecommand\rotatebox[2]{#2}%
  \newcommand*\fsize{\dimexpr\f@size pt\relax}%
  \newcommand*\lineheight[1]{\fontsize{\fsize}{#1\fsize}\selectfont}%
  \ifx\svgwidth\undefined%
    \setlength{\unitlength}{268.1918283bp}%
    \ifx\svgscale\undefined%
      \relax%
    \else%
      \setlength{\unitlength}{\unitlength * \real{\svgscale}}%
    \fi%
  \else%
    \setlength{\unitlength}{\svgwidth}%
  \fi%
  \global\let\svgwidth\undefined%
  \global\let\svgscale\undefined%
  \makeatother%
  \begin{picture}(1,0.74439631)%
    \lineheight{1}%
    \setlength\tabcolsep{0pt}%
    \put(0,0){\includegraphics[width=\unitlength,page=1]{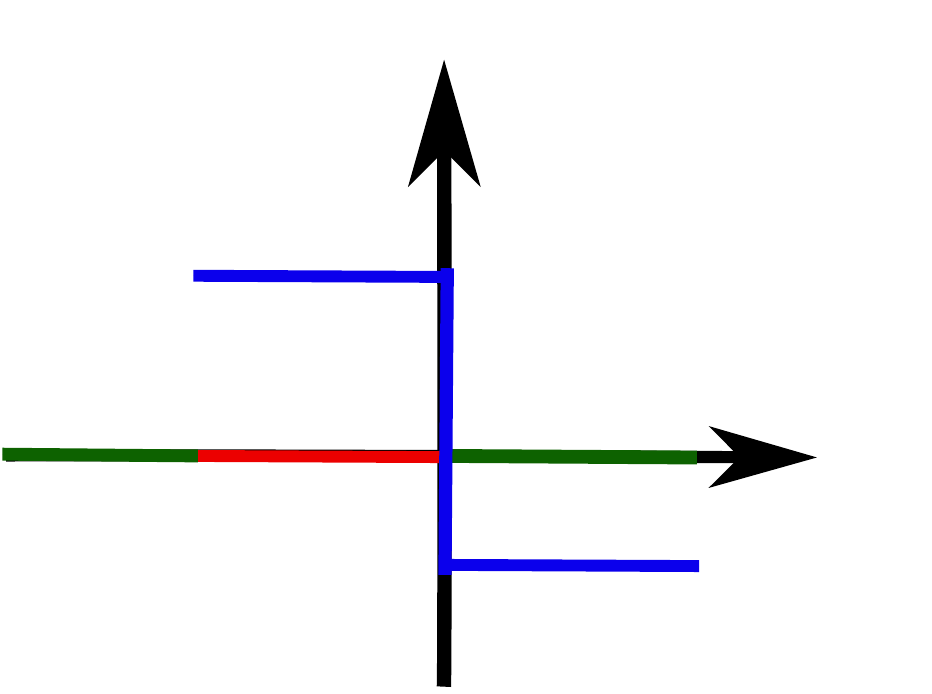}}%
    \put(0.18567641,0.18021467){\color[rgb]{1,0,0}\makebox(0,0)[lt]{\lineheight{1.25}\smash{\begin{tabular}[t]{l}$X_o$\end{tabular}}}}%
    \put(0.51517687,0.28088898){\color[rgb]{0.2,0.50196078,0}\makebox(0,0)[lt]{\lineheight{1.25}\smash{\begin{tabular}[t]{l}$X_u$\end{tabular}}}}%
    \put(0.79922026,0.161731){\color[rgb]{0,0,0}\makebox(0,0)[lt]{\lineheight{1.25}\smash{\begin{tabular}[t]{l}$x$\end{tabular}}}}%
    \put(0.34663472,0.65831664){\color[rgb]{0,0,1}\makebox(0,0)[lt]{\lineheight{1.25}\smash{\begin{tabular}[t]{l}$F(x)$\end{tabular}}}}%
    \put(0.35216283,0.08889905){\color[rgb]{0,0,1}\makebox(0,0)[lt]{\lineheight{1.25}\smash{\begin{tabular}[t]{l}-1\end{tabular}}}}%
    \put(0.49181254,0.41088487){\color[rgb]{0,0,1}\makebox(0,0)[lt]{\lineheight{1.25}\smash{\begin{tabular}[t]{l}2\end{tabular}}}}%
    \put(0,0){\includegraphics[width=\unitlength,page=2]{drawing.pdf}}%
  \end{picture}%
\endgroup%
 
 \caption{ The graph of $F$ and the initial and unsafe sets $(X_o,X_u)$.  \label{FigFigo1}}
 \end{center}
\end{figure}

Let the barrier function candidate $B(x) := x(x+2)$. As a result, it follows that  
$ K  = [-2,0]$ and 
$$ U(K)\backslash K = (-2 - \sigma,-2) \cup (0,\sigma),  $$ 
for some  $\sigma > 0$.

We start noting that $F$ satisfies Assumption \ref{ass1} and that 
\begin{align*}
\langle \nabla B(x), F(x) \rangle =  \begin{cases}
-2(x+1) & x \in (0,\sigma) \\
4(x+1) & x \in (-2-\sigma,-2).
\end{cases}
\end{align*}
Hence, 
\begin{align*}
    \langle \nabla B(x), F(x) \rangle \leq 0 \qquad \forall x \in U(\partial K) \backslash K.
\end{align*}
Thus,  \textcolor{blue}{using Lemma \ref{lemA8}},  we conclude that $\Sigma$ is safe with respect to $(X_o,X_u)$.  

Now, we consider the perturbed version $\Sigma_\varepsilon$ when
 $\varepsilon(x) = 1$ for all $x \in \mathbb{R}$ (see Fig. \ref{FigFigo3}).  
 
\begin{figure}
\begin{center}
  \def\svgwidth{1.3\columnwidth}
\begingroup%
  \makeatletter%
  \providecommand\color[2][]{%
    \errmessage{(Inkscape) Color is used for the text in Inkscape, but the package 'color.sty' is not loaded}%
    \renewcommand\color[2][]{}%
  }%
  \providecommand\transparent[1]{%
    \errmessage{(Inkscape) Transparency is used (non-zero) for the text in Inkscape, but the package 'transparent.sty' is not loaded}%
    \renewcommand\transparent[1]{}%
  }%
  \providecommand\rotatebox[2]{#2}%
  \newcommand*\fsize{\dimexpr\f@size pt\relax}%
  \newcommand*\lineheight[1]{\fontsize{\fsize}{#1\fsize}\selectfont}%
  \ifx\svgwidth\undefined%
    \setlength{\unitlength}{447.37106824bp}%
    \ifx\svgscale\undefined%
      \relax%
    \else%
      \setlength{\unitlength}{\unitlength * \real{\svgscale}}%
    \fi%
  \else%
    \setlength{\unitlength}{\svgwidth}%
  \fi%
  \global\let\svgwidth\undefined%
  \global\let\svgscale\undefined%
  \makeatother%
  \begin{picture}(1,0.46495883)%
    \lineheight{1}%
    \setlength\tabcolsep{0pt}%
    \put(0,0){\includegraphics[width=\unitlength,page=1]{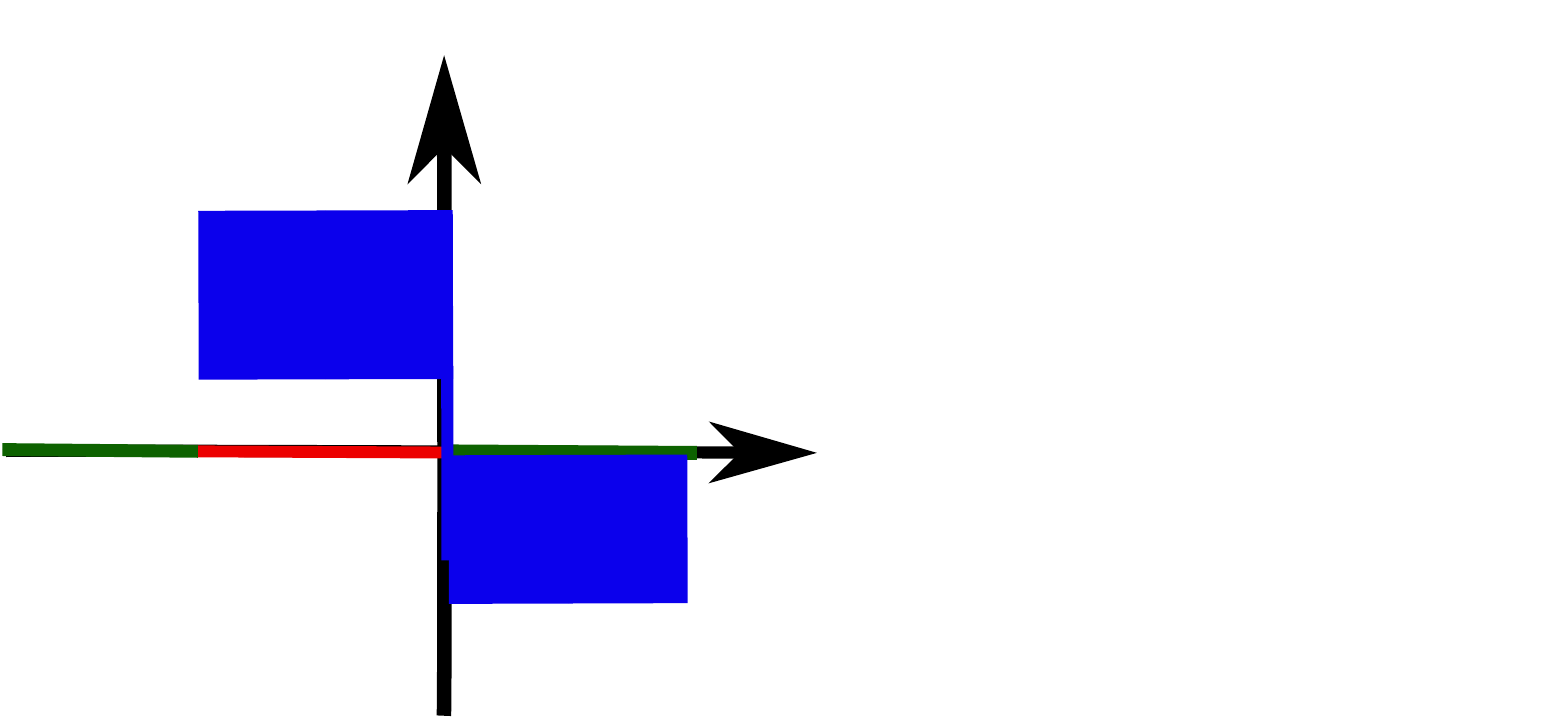}}%
    \put(0.12906141,0.11128352){\color[rgb]{1,0,0}\makebox(0,0)[lt]{\lineheight{1.25}\smash{\begin{tabular}[t]{l}$X_o$\end{tabular}}}}%
    \put(0.29416449,0.18936129){\color[rgb]{0.2,0.50196078,0}\makebox(0,0)[lt]{\lineheight{1.25}\smash{\begin{tabular}[t]{l}$X_u$\end{tabular}}}}%
    \put(0.45725834,0.11618207){\color[rgb]{0,0,0}\makebox(0,0)[lt]{\lineheight{1.25}\smash{\begin{tabular}[t]{l}$x$\end{tabular}}}}%
    \put(0,0){\includegraphics[width=\unitlength,page=2]{drawing1.pdf}}%
    \put(0.2956712,0.29663964){\color[rgb]{0,0,0}\makebox(0,0)[lt]{\lineheight{1.25}\smash{\begin{tabular}[t]{l}3\end{tabular}}}}%
    \put(0,0){\includegraphics[width=\unitlength,page=3]{drawing1.pdf}}%
    \put(0.20915691,0.03900307){\color[rgb]{0,0,0}\makebox(0,0)[lt]{\lineheight{1.25}\smash{\begin{tabular}[t]{l}-2\end{tabular}}}}%
    \put(0.30558079,0.41335544){\color[rgb]{0,0,1}\makebox(0,0)[lt]{\lineheight{1.25}\smash{\begin{tabular}[t]{l}$F(x)+\mathbb{B}$\end{tabular}}}}%
  \end{picture}%
\endgroup%
 
 \caption{ The graph of $x \mapsto F(x) + \mathbb{B}$ and the initial and unsafe sets $(X_o,X_u)$. \label{FigFigo3}}
 \end{center}
\end{figure} 
 
 We note that,  for all $\mu \in \mathbb{B}$,
\begin{align*}
    \langle \nabla B(x), & ~ F(x) + \mu \rangle  \\ & =
   \begin{cases}
     2(x+1)(-1 + \mu) & \forall x \in (0,\sigma) 
     \\
     2(x+1)(2+ \mu) & \forall x \in (-2-\sigma,-2).
     \end{cases}
\end{align*}
The latter implies that
\begin{align*}
 \langle \nabla B(x), F(x) + \mu \rangle  \leq 0 \qquad \forall x \in U(\partial K)\backslash K, ~~ \forall \mu \in \mathbb{B}.
\end{align*}
Therefore,  \textcolor{blue}{using Lemma \ref{lemA8}},  we conclude that $\Sigma_{\varepsilon}$ is safe with respect to $(X_o,X_u)$.  Hence, $\Sigma$ is robustly safe with respect to $(X_o,X_u)$.

At this point, we show that $\Sigma$ is not strongly robustly safe with respect to $(X_o,X_u)$. Namely, we show that for any constant perturbation $\varepsilon$, there is a solution to $\Sigma^s_\varepsilon$ that starts from and leaves $X_o$. 

\begin{figure} 
\begin{center}
  \def\svgwidth{3\columnwidth}
\begingroup%
  \makeatletter%
  \providecommand\color[2][]{%
    \errmessage{(Inkscape) Color is used for the text in Inkscape, but the package 'color.sty' is not loaded}%
    \renewcommand\color[2][]{}%
  }%
  \providecommand\transparent[1]{%
    \errmessage{(Inkscape) Transparency is used (non-zero) for the text in Inkscape, but the package 'transparent.sty' is not loaded}%
    \renewcommand\transparent[1]{}%
  }%
  \providecommand\rotatebox[2]{#2}%
  \newcommand*\fsize{\dimexpr\f@size pt\relax}%
  \newcommand*\lineheight[1]{\fontsize{\fsize}{#1\fsize}\selectfont}%
  \ifx\svgwidth\undefined%
    \setlength{\unitlength}{1105.81372467bp}%
    \ifx\svgscale\undefined%
      \relax%
    \else%
      \setlength{\unitlength}{\unitlength * \real{\svgscale}}%
    \fi%
  \else%
    \setlength{\unitlength}{\svgwidth}%
  \fi%
  \global\let\svgwidth\undefined%
  \global\let\svgscale\undefined%
  \makeatother%
  \begin{picture}(1,0.17850365)%
    \lineheight{1}%
    \setlength\tabcolsep{0pt}%
    \put(0,0){\includegraphics[width=\unitlength,page=1]{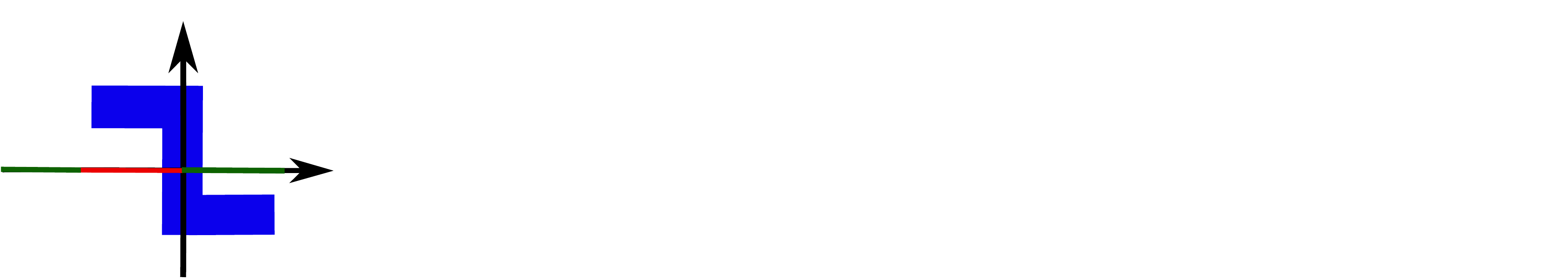}}%
    \put(0.03740684,0.0511122){\color[rgb]{1,0,0}\makebox(0,0)[lt]{\lineheight{1.25}\smash{\begin{tabular}[t]{l}$X_o$\end{tabular}}}}%
    \put(0.12711543,0.07862159){\color[rgb]{0,0.50196078,0}\makebox(0,0)[lt]{\lineheight{1.25}\smash{\begin{tabular}[t]{l}$X_u$\end{tabular}}}}%
    \put(0.19511897,0.04699416){\color[rgb]{0,0,0}\makebox(0,0)[lt]{\lineheight{1.25}\smash{\begin{tabular}[t]{l}$x$\end{tabular}}}}%
    \put(0,0){\includegraphics[width=\unitlength,page=2]{drawing2.pdf}}%
    \put(0.0857461,0.02551831){\color[rgb]{0,0,0}\makebox(0,0)[lt]{\lineheight{1.25}\smash{\begin{tabular}[t]{l}-1\end{tabular}}}}%
    \put(0.12865659,0.10390941){\color[rgb]{0,0,0}\makebox(0,0)[lt]{\lineheight{1.25}\smash{\begin{tabular}[t]{l}2\end{tabular}}}}%
    \put(0.12227316,0.15762684){\color[rgb]{0,0,1}\makebox(0,0)[lt]{\lineheight{1.25}\smash{\begin{tabular}[t]{l}$F(x + \varepsilon(x)\mathbb{B})+\varepsilon(x)\mathbb{B}$\end{tabular}}}}%
  \end{picture}%
\endgroup%
 
 \caption{ The graph of $x \mapsto F(x + \varepsilon(x) \mathbb{B}) + \varepsilon(x) \mathbb{B}$ and the initial and unsafe sets $(X_o,X_u)$.   \label{FigFigo2} }
 \end{center}
\end{figure}

Note that we can reason on constant perturbations since $X_o$ is compact.  

For each $\varepsilon>0$, we introduce the system
\begin{equation*}
\begin{aligned}
\dot{x} \in \begin{cases} 
2 + \varepsilon \mathbb{B}  & \text{if} \;\ x + \varepsilon \mathbb{B}  \leq 0 \\
[-1,2] + \varepsilon \mathbb{B} & \text{if} \;\ x + \varepsilon \mathbb{B} = 0 \\
-1 + \varepsilon \mathbb{B} & \text{if} \;\ x + \varepsilon \mathbb{B} \geq 0. 
\end{cases}  
\end{aligned}
\end{equation*}
The previous system can be further expressed as (see Fig. \ref{FigFigo2})
\begin{equation}
\label{eqce}
\begin{aligned}
\dot{x} \in  
 \begin{cases}
[2 - \varepsilon, 2 + \varepsilon]  & \text{if} \;\ x \leq - \varepsilon \\
[-1 - \varepsilon, 2 + \varepsilon] & \text{if} \;\ x \in [-\varepsilon, \varepsilon] \\
[-1 - \varepsilon, -1 + \varepsilon] & \text{if} \;\ x \geq \varepsilon.
\end{cases}
\end{aligned}
\end{equation}
\textcolor{blue}{Now,  for any $\varepsilon >0$,  we consider the function $\phi_\varepsilon : [0,\varepsilon] \rightarrow \mathbb{R}^n$ satisfying $\phi_\varepsilon(0) = 0$ and  $\dot{\phi}_\varepsilon(t) = 1 \in 
F( \phi_\varepsilon(t) + \varepsilon( \phi_\varepsilon(t) ) \mathbb{B} ) + \varepsilon( \phi_\varepsilon(t) ) \mathbb{B}$.   Hence,  $\phi_\varepsilon$ is solution to \eqref{eqce},  starting from $\partial X_o$,  that leaves $X_o$.  Hence, for any $\varepsilon > 0$, $\Sigma^{s}_{\varepsilon}$ is not safe.  }
\end{example}

\section{Strong Robust Safety}  \label{sec.4}

\textcolor{blue}{In the next section, we propose sufficient conditions, in terms of barrier functions, to guarantee strong robust  safety.  Furthermore,  we address the converse problem.}

We start recalling a tool that allows us to use  locally-Lipschitz barrier functions \cite{clarke2008nonsmooth}.

\begin{definition} [Clarke generalized gradient] \label{defgen}
\blue{Let $B : \mathbb{R}^n \rightarrow \mathbb{R}$ be locally Lipschitz.  Let $\Omega \subset \mathbb{R}^n$ be any null-measure set,  and let $\Omega_B \subset \mathbb{R}^n$ be the set of points at which $B$ fails to be differentiable. The Clarke generalized gradient of $B$ at $x$ is the set-valued map 
$\partial_C B : \mathbb{R}^n \rightrightarrows \mathbb{R}^n$  given by
\begin{align*}
\partial_C B(x) := \co \left\{ \lim_{i \rightarrow \infty} \nabla B(x_i) : x_i \rightarrow x,~x_i \in \Omega \backslash \Omega_B \right\}.
\end{align*}
where $\co(\cdot)$ is the convex hull of the elements in $(\cdot)$}.  
\end{definition} 

\begin{remark}
When $B$ is locally Lipschitz, then $\partial_C B$ is upper semicontinuous and its images are nonempty, compact, and convex. 
\end{remark}

\subsection{Sufficient Conditions}

In the following result,  we show that the sufficient conditions for robust safety in \cite{RubSafPI} are strong enough to guarantee strong robust safety under Assumption \ref{ass1}. 

\begin{theorem} \label{thm1}
Consider system $\Sigma$ such that Assumption \ref{ass1} holds.
Let $B : \mathbb{R}^n \rightarrow \mathbb{R}$ be a barrier function candidate with respect to $(X_o, X_u) \subset \mathbb{R}^n \times \mathbb{R}^n$. Then, $\Sigma$ is strongly robustly safe with respect to $(X_o,X_u)$ if one the following conditions holds:
\begin{enumerate} [label={C\ref{thm1}\arabic*.},leftmargin=*]
\item \label{item:cond1} There exists a continuous function $\varepsilon : \mathbb{R}^n \rightarrow \mathbb{R}_{>0}$ such that the set $K$ in \eqref{eq.4} is forward invariant for $\Sigma^s_{\varepsilon}$.
\item \label{item:cond1bis}   $\Sigma$ is robustly safe  with respect to $(X_o, X_u)$ and $F$ is continuous. 

\item \label{item:cond3} The function $B$ is locally Lipschitz and 
\begin{align} \label{eqlipadd}
\hspace{-0.4cm} \langle  \partial_C B(x), \eta \rangle  \subset  \mathbb{R}_{<0} \quad \forall \eta \in F(x),  ~  \forall x \in  \partial K.
\end{align}
\item \label{item:cond2}  $B$ is continuously differentiable and \eqref{eqrsafe} holds. 

\end{enumerate}
\end{theorem}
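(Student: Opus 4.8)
The plan is to reduce everything to condition \ref{item:cond1}: once a continuous $\varepsilon:\mathbb{R}^n\to\mathbb{R}_{>0}$ is found for which the set $K$ of \eqref{eq.4} is forward invariant for $\Sigma^s_\varepsilon$, strong robust safety is immediate, since $X_o\subset K$ and $K\cap X_u=\emptyset$ by the barrier property \eqref{eq.3}, so every solution of $\Sigma^s_\varepsilon$ issued from $X_o$ stays in $K\subset\mathbb{R}^n\setminus X_u$, i.e. $\Sigma^s_\varepsilon$ is safe. Thus it suffices to prove the implications \ref{item:cond2}$\,\Rightarrow\,$\ref{item:cond3}$\,\Rightarrow\,$\ref{item:cond1} and \ref{item:cond1bis}$\,\Rightarrow\,$\ref{item:cond1}. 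The first one is free: when $B$ is continuously differentiable, $\partial_C B(x)=\{\nabla B(x)\}$ for every $x$ by Definition \ref{defgen}, so \eqref{eqrsafe} is literally \eqref{eqlipadd}.

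\emph{Proof of \ref{item:cond3}$\,\Rightarrow\,$\ref{item:cond1}.} First I would check that, for any continuous $\varepsilon>0$, the inclusion $\Sigma^s_\varepsilon$ in \eqref{eq3} is itself well posed: under Assumption \ref{ass1} the map $x\mapsto F(x+\varepsilon(x)\mathbb{B})$ is locally bounded and outer semicontinuous with compact images, hence so is its closed convex hull, and adding the continuous term $\varepsilon(x)\mathbb{B}$ preserves nonemptiness, compactness, convexity and upper semicontinuity. Consequently Lemma \ref{lemA8} applies to $\Sigma^s_\varepsilon$, and it is enough to produce $\varepsilon$ and an open neighborhood $U(K)$ of $K$ such that the nonsmooth sufficient condition behind \ref{item:star1} holds, namely $\langle\partial_C B(x),\eta\rangle\subset\mathbb{R}_{\leq 0}$ for all $\eta\in\co\{F(x+\varepsilon(x)\mathbb{B})\}+\varepsilon(x)\mathbb{B}$ and all $x\in U(K)\setminus K$. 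Now $\partial K\subset\{B=0\}$ (continuity of $B$), and both $\partial_C B$ and $F$ are upper semicontinuous with compact images (the former by the remark after Definition \ref{defgen}), so the strict inequality \eqref{eqlipadd} is an open property: for every $x_0\in\partial K$ there are $\delta(x_0)>0$, $M(x_0)>0$ and a neighborhood $V(x_0)$ with $\langle v,\eta\rangle\leq-\delta(x_0)$ and $|v|\leq M(x_0)$ for all $x\in V(x_0)$, $v\in\partial_C B(x)$, $\eta\in F(x)$. Using upper semicontinuity of $F$ once more, for $\varepsilon(x)$ small one gets $\co\{F(x+\varepsilon(x)\mathbb{B})\}\subset F(x)+\rho\mathbb{B}$ with $\rho$ as small as wanted (here convexity of $F(x)$ is used so that its $\rho$-inflation is convex and absorbs the convex hull), hence for $\eta$ in the right-hand side of \eqref{eq3},
\begin{align*}
\langle v,\eta\rangle\ \leq\ -\delta(x_0)+M(x_0)\bigl(\rho+\varepsilon(x)\bigr)\ <\ 0
\end{align*}
as soon as $\rho+\varepsilon(x)<\delta(x_0)/M(x_0)$. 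This yields a local positive threshold $\bar\varepsilon(x_0)$ below which the desired infinitesimal inequality holds on $V(x_0)\setminus K$; a partition-of-unity argument (continuous selection strictly below a positive lower-semicontinuous bound) then assembles these local thresholds into a globally defined continuous $\varepsilon:\mathbb{R}^n\to\mathbb{R}_{>0}$, and shrinking $U(K)$ to a thin shell around $\partial K$ covered by the $V(x_0)$'s makes the condition hold on $U(K)\setminus K$, so $K$ is forward invariant for $\Sigma^s_\varepsilon$.

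\emph{Proof of \ref{item:cond1bis}$\,\Rightarrow\,$\ref{item:cond1}.} Let $\varepsilon_0>0$ be a continuous robust-safety margin, so $\Sigma_{\varepsilon_0}$ is safe, and assume $F$ continuous. I would seek a continuous $\varepsilon>0$ with $\co\{F(x+\varepsilon(x)\mathbb{B})\}+\varepsilon(x)\mathbb{B}\subset F(x)+\varepsilon_0(x)\mathbb{B}$ for all $x$; then every solution of $\Sigma^s_\varepsilon$ solves $\Sigma_{\varepsilon_0}$, so $\Sigma^s_\varepsilon$ is safe (even more than \ref{item:cond1} requires). Upper semicontinuity of $F$ gives $F(x+\varepsilon(x)\mathbb{B})\subset F(x)+(\varepsilon_0(x)/2)\mathbb{B}$ for $\varepsilon(x)$ small, and taking convex hull then adding $\varepsilon(x)\mathbb{B}$ with $\varepsilon(x)\leq\varepsilon_0(x)/2$ stays inside $F(x)+\varepsilon_0(x)\mathbb{B}$ by convexity of $F(x)$. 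The role of \emph{lower} semicontinuity of $F$ is precisely to guarantee that the maximal admissible threshold $x\mapsto\bar\varepsilon(x)$ is lower semicontinuous and positive — without it, $F$ may collapse at points $x'$ arbitrarily close to $x$, so that $F(x'+\delta\mathbb{B})$ dwarfs $F(x')+\varepsilon_0(x')\mathbb{B}$ and no $\delta>0$ works near $x'$ — after which the same continuous-selection construction delivers $\varepsilon$.

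\emph{Main obstacle.} The implications involving \ref{item:cond1} and \ref{item:cond2} are routine; the substance lies in (i) verifying that $\Sigma^s_\varepsilon$ satisfies Assumption \ref{ass1} so Lemma \ref{lemA8} is applicable, and, above all, (ii) upgrading the purely local thresholds $\bar\varepsilon(x_0)$ to a single continuous positive margin $\varepsilon$ on $\mathbb{R}^n$. Step (ii) is where lower semicontinuity/positivity of the maximal admissible bound and a partition-of-unity (paracompactness) argument are indispensable, and it is also the exact point at which continuity of $F$ — not merely the upper semicontinuity of Assumption \ref{ass1} — is needed in case \ref{item:cond1bis}.
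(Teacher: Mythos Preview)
Your proposal is correct and closely parallels the paper's proof: the reductions \ref{item:cond2}$\Rightarrow$\ref{item:cond3} and \ref{item:cond1bis}$\Rightarrow$(safety of $\Sigma^s_\varepsilon$) are identical in spirit, and for \ref{item:cond1bis} the paper packages your ``continuous selection below a positive lsc threshold'' step as Lemma~\ref{lemA333}, reaching the same inclusion $\co\{F(x+\delta(x)\mathbb{B})\}+(\varepsilon(x)/2)\mathbb{B}\subset F(x)+\varepsilon(x)\mathbb{B}$.

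The one noteworthy difference is in the handling of \ref{item:cond3}. The paper grids $\partial K$ by a locally finite family of compacts $\{D_i\}$ (Whitney covering), establishes by a compactness/contradiction argument a uniform $\delta_i>0$ on each $D_i$ for which \eqref{eqlipadd} persists with $F$ replaced by $H(\cdot,\delta_i)$, takes the lsc envelope $\varepsilon_o(x)=\min\{\delta_i:x\in D_i\}$, and descends to a continuous $\varepsilon$ via Kat\v{e}tov's theorem; it then invokes an external robust-safety result (\cite[Theorem~5]{draftautomatica}) applied to $\dot x\in H(x,\varepsilon(x))$ on $\partial K$ only. You instead work pointwise, pushing the strict inequality from $x_0\in\partial K$ to an open $V(x_0)$ by upper semicontinuity of $\partial_C B$ and $F$, patch the local thresholds by partition of unity, and conclude via Lemma~\ref{lemA8} on a shell $U(K)\setminus K$. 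Your route is slightly more self-contained (it avoids the external citation and stays within the paper's Lemma~\ref{lemA8}), at the cost of having to carry the infinitesimal inequality off $\partial K$ into the shell; the paper's route keeps everything on $\partial K$ but outsources the final invariance step. Both are valid and the underlying analysis---compactness of $\partial_C B(x)\times F(x)$, upper semicontinuity, and continuous minorants of positive lsc functions---is the same.
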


\begin{proof}
\blue{We start noting that establishing strong robust safety under \ref{item:cond2} would follow  straightforwardly   if we prove robust safety under \ref{item:cond3}.  Indeed,  when $B$ is continuously differentiable,   $\partial_C B = \nabla B$. 

We now prove strong robust safety  under each of the statements in \ref{item:cond1}-\ref{item:cond3}. }
\begin{itemize}
\item Under \ref{item:cond1},   we conclude the existence of a continuous function $\varepsilon : \mathbb{R}^n \rightarrow \mathbb{R}_{>0}$ such that the solutions to $\Sigma^s_{\varepsilon}$ starting from $X_o$ never reach $X_u$, which by definition implies strong robust safety of $\Sigma$ with respect to $(X_o,X_u)$. 

\item  Under \ref{item:cond1bis},     there exists $\varepsilon : \mathbb{R}^n \rightarrow \mathbb{R}_{>0}$ continuous such that $\Sigma_{\varepsilon}$ is safe.  Using Lemma \ref{lemA333} in Appendix II,  we conclude the existence of $\delta : \mathbb{R}^n \rightarrow \mathbb{R}_{>0}$ continuous such that 
$$    F(x + \delta(x) \mathbb{B})  \subset F(x) + (\varepsilon(x)/2) \mathbb{B} \qquad \forall x \in \mathbb{R}^n,      $$
which implies that 
$$    F(x + \delta(x)  \mathbb{B})  + (\varepsilon(x)/2) \mathbb{B}   \subset F(x) + \varepsilon(x) \mathbb{B} \qquad \forall x \in \mathbb{R}^n.      $$
Now,  since $F$ is convex, we conclude that 
$$  \co \{  F(x + \delta(x)  \mathbb{B}) \} + (\varepsilon(x)/2) \mathbb{B}   \subset F(x) + \varepsilon(x) \mathbb{B} \qquad \forall x \in \mathbb{R}^n.      $$
Hence,  $\Sigma^s_{\varepsilon_o}$ defined as in \eqref{eq3}, with $\varepsilon_o := \min \{\delta, \varepsilon/2 \}$,   is safe.
 
\item Under \ref{item:cond3},   we propose to grid the boundary $\partial K$ using a sequence of nonempty compact subsets $\{ D_i\}_{i=1}^{N}$, where $N \in \{1,2,3, \dots ,\infty \}$. 
We assume that 
\begin{align*}
\bigcup^{N}_{i = 1} D_i  = \partial K, \qquad D_i  \subset \partial K \quad  \forall i \in \{1,2, \dots ,N\}.
\end{align*}
Furthermore,  we choose the sequence 
$\{ D_i\}_{i=1}^{N}$ such that,  for each $i \in \{1,2,...,N\}$,  we can find a finite set  $\mathcal{N}_i \subset \{1,2,...,N \}$  
such that 
\begin{align*}
D_i \cap D_j & = \emptyset \qquad \forall  j \notin \mathcal{N}_i,
\\
\mbox{int}_{\partial K}(D_i \cap D_j) & = \emptyset \qquad \forall j \in \mathcal{N}_i,
\end{align*}
where $\mbox{int}_{\partial K}(\cdot)$ denotes the  interior of $(\cdot)$ relative to $\partial K$.  
\blue{Such a decomposition always exists according to  Whitney covering lemma \cite{10.2307/1989708}.}

Next,  we introduce the following claim.
\begin{claim} \label{clm1}
 For each $i \in \{1,2,...,N\}$, there exists $\delta_i>0$ such that,  for each $x \in  D_i$,
\begin{align}\label{eq00}
\langle  \partial_C B(x), \eta \rangle \subset (-\infty, 0) \qquad \forall \eta \in H(x, \delta_i),
\end{align}
where $H(x, \delta_i) := \co \{ F(x + \delta_i \mathbb{B})  \} + \delta_i \mathbb{B}$.
\end{claim}

\blue{To complete the proof,  under Claim \ref{clm1},  we let
\begin{equation}\label{eq0}
\begin{aligned}
\varepsilon_o(x) := \min \{ \delta_{i} : x \in D_i \}.
\end{aligned}
\end{equation}
Clearly,  since the sequence $\{ \delta_i\}^N_{i=1}$ is positive,  and by definition of the sequence $\{ D_i\}^N_{i=1}$,  we conclude that $\varepsilon_o$ is strictly positive and lower semicontinuous.  Finally,  \textcolor{blue}{using  \cite[Theorem 1]{katvetov1951real}},   we conclude the existence of a continuous function $\varepsilon : \partial K \rightarrow \mathbb{R}_{ > 0}$ such that $ \varepsilon(x) \leq \varepsilon_o(x)$ for all $x \in \partial K$.  As a result, we conclude that 
\begin{align}\label{eq00*}
\hspace{-0.7cm} \langle  \partial_C B(x), \eta \rangle \subset (-\infty, 0) ~~ \forall \eta \in H(x, \varepsilon(x)) ~~ \forall x \in \partial K.
\end{align}
As a result,  using \cite[Theorem 5]{draftautomatica},  we conclude that the system 
$\dot{x} \in H(x, \varepsilon(x))$,  with $x \in \mathbb{R}^n$,  is robustly safe; hence,  $\Sigma$ is strongly robustly safe}.  
\end{itemize}

Finally,  we prove Claim \ref{clm1}  using  contradiction.  
That is,  we assume the existence of a positive sequence $\{\sigma_{j} \}^\infty_{j=1}$ that converges to zero such that,  for each $j \in \{1,2,...\}$,  we can find $x_j \in D_i$, $\eta_j \in \partial_C B(x_j)$,  and $\zeta_j \in H(x_j, \sigma_j)$ such that 
$ \langle \eta_j, \zeta_j\rangle \geq 0$.
 
Since $D_i$ is compact, by passing to an appropriate sub-sequence,  we conclude  the existence of $x^* \in D_i$ such that $\lim_{j \rightarrow \infty} x_j = x^*$.
 
Next, since the set-valued map $\partial_C B$ is upper semicontinuous with nonempty and compact images, we conclude that by passing to an appropriate sub-sequence 
$$ \lim_{j \rightarrow \infty} \eta_j = \eta^* \in \partial_C B(x^*). $$ 

\textcolor{blue}{Furthermore, we note that $H$ is  the convex hull of the composition of $F$ and $(x,\delta) \mapsto x + \delta \mathbb{B}$  added to the map  $\delta \mapsto \delta \mathbb{B}$. } The latter map is continuous and $F$ verifies Assumption \ref{ass1}.  Hence, $H$ enjoys the same properties as $F$.  As a result,  we have  $ \lim_{j \rightarrow \infty} \zeta_j = \zeta^* \in H(x^*,0) = F(x^*)$.

Now,  since $\langle \eta_j, \zeta_j\rangle \geq 0$ for all $j \in \{1,2,... \}$,  then $\lim_{j \rightarrow \infty} 
 \langle \eta_j,  \zeta_j \rangle \geq 0$.  
 However,  this contradicts \eqref{eqlipadd}, which proves the claim. 
\end{proof}

\begin{remark}
\blue{  Note that we can show strong robust safety by showing robust safety for the system  $\dot{x} \in 
\co \{F(x + \varepsilon(x) \mathbb{B})\}$,  or by showing safety for   $\Sigma^s_\varepsilon$,  for some $\varepsilon$ continuous and positive.  However,  the condition we need to use in this case must involve the term 
$\varepsilon$,  hence,  one need to search for such a valid candidate $\varepsilon$ for which the condition holds.  As opposed to the proposed conditions,  which involve the nominal system $\Sigma$ only,  and induce the existence of a robustness  margin $\varepsilon$.  }
\end{remark}

\subsection{The Converse Problem}

In this section,  we establish the equivalence between strong robust safety and the existence of a continuously differentiable barrier functions candidate $B$ satisfying \eqref{eqrsafe}.   Thanks to the following intermediate result,  which establishes that we can always squeeze a continuous set-valued map between an upper semicontinuous map and its strongly perturbed version.   
\begin{proposition} \label{lemA444}
Consider a set-valued map $F : \mathbb{R}^n \rightrightarrows \mathbb{R}^n$ satisfying Assumption \ref{ass1}.  Then,  for each continuous function $\varepsilon : \mathbb{R}^n \to \mathbb{R}_{>0}$,  there exists a continuous set-valued map $G : \mathbb{R}^n \rightrightarrows \mathbb{R}^n$ with convex and compact images such that
\textcolor{blue}{\begin{align*}
    F(x) \subset G(x) \subset \co \{ F(x+\varepsilon(x) \mathbb{B}) \} + \varepsilon(x) \mathbb{B}  \qquad \forall x \in \mathbb{R}^n.
\end{align*}}
\end{proposition}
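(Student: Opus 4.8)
The plan is to construct $G$ explicitly by a local mollification of the (already well-behaved) map $x \mapsto \co\{F(x + \tfrac{1}{2}\varepsilon(x)\mathbb{B})\}$, using a continuous partition-of-unity-type averaging over a locally finite cover of $\mathbb{R}^n$, with mesh tied to the modulus of continuity of $\varepsilon$. Concretely, I would first fix, via the continuity and positivity of $\varepsilon$, a continuous function $\rho:\mathbb{R}^n\to\mathbb{R}_{>0}$ with $\rho(x) \le \tfrac{1}{4}\varepsilon(x)$ and such that $|x-y|\le 2\rho(x)$ implies $\varepsilon(y)\ge \tfrac12\varepsilon(x)$ and $\rho(y) \le 2\rho(x)$ (a Lipschitz-type lower envelope trick: replace $\rho$ by $x\mapsto \inf_y\{\tfrac14\varepsilon(y) + |x-y|\}$-style regularization so that the desired inequalities hold on balls of radius $\rho(x)$). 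Then I would take a locally finite open cover $\{U_k\}$ of $\mathbb{R}^n$ by balls $U_k = x_k + \rho(x_k)\mathbb{B}^\circ$ with a subordinate continuous partition of unity $\{\psi_k\}$, and define
\begin{align*}
    G(x) := \co\Big\{ \textstyle\bigcup_{k:\,\psi_k(x)>0} \co\{F(U_k + \tfrac14\varepsilon(x_k)\mathbb{B})\} \Big\}.
\end{align*}
The point is that $G(x)$ is a convex hull of finitely many compact convex sets (hence compact and convex), and near a fixed $x$ only finitely many indices $k$ are active and they vary "continuously" in the sense that membership $\psi_k(x)>0$ is an open condition, so one gets both outer and lower semicontinuity of $G$; I would cite Assumption~\ref{ass1} (equivalently outer semicontinuity plus local boundedness) to get that each $F(U_k+\cdots)$ is contained in a bounded set, ensuring local boundedness of $G$.

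Next I would verify the two inclusions. For $F(x)\subset G(x)$: since $x$ lies in some $U_k$ with $\psi_k(x)>0$ and $x\in U_k + \tfrac14\varepsilon(x_k)\mathbb{B}$, we get $F(x)\subset \co\{F(U_k+\cdots)\}\subset G(x)$. For the upper inclusion $G(x)\subset \co\{F(x+\varepsilon(x)\mathbb{B})\} + \varepsilon(x)\mathbb{B}$: if $\psi_k(x)>0$ then $x\in U_k$, so $|x - x_k| < \rho(x_k)$; combined with the choices of $\rho$ this yields $U_k + \tfrac14\varepsilon(x_k)\mathbb{B} \subset x + (\rho(x_k) + \rho(x_k) + \tfrac14\varepsilon(x_k))\mathbb{B} \subset x + \varepsilon(x)\mathbb{B}$ after bounding $\rho(x_k)\le \tfrac14\varepsilon(x_k)$ and $\varepsilon(x_k)\le 2\varepsilon(x)$ (so the sum of radii is $\le \tfrac34\varepsilon(x_k)\le \tfrac32\varepsilon(x)$ — here I would simply tighten the coefficient on $\rho$, say $\rho \le \tfrac1{16}\varepsilon$, so the bound comes out $\le \varepsilon(x)$). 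Hence $\co\{F(U_k+\cdots)\}\subset \co\{F(x+\varepsilon(x)\mathbb{B})\}$, and taking convex hull over the finitely many active $k$ keeps us inside $\co\{F(x+\varepsilon(x)\mathbb{B})\}$, which is already convex; the extra $+\varepsilon(x)\mathbb{B}$ on the right is not even needed but harmless. Continuity of $G$ follows from local finiteness: on a small neighborhood of $x$ the active index set is contained in a fixed finite set, and one checks outer semicontinuity directly from that of each $F(U_k+\cdots)$ and closedness of finite unions, while lower semicontinuity follows because each $U_k$ is open so $\psi_k(x)>0$ persists on a neighborhood.

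The main obstacle I anticipate is getting continuity of $G$ in both senses simultaneously while keeping images convex and compact — the naive "$G(x)=\co\{F(x+\rho(x)\mathbb{B})\}$" is outer semicontinuous but can fail lower semicontinuity when $\rho$ is merely continuous (not locally Lipschitz, so the ball radius can "open up" discontinuously in a limiting sense). The partition-of-unity construction above circumvents this by replacing the continuously-varying ball by a \emph{locally constant-indexed} union that only ever gains or loses terms across the open sets $\{\psi_k>0\}$; the price is the bookkeeping of the cover and the radius inequalities, which is routine but must be done carefully so that the active balls at $x$ all sit inside $x+\varepsilon(x)\mathbb{B}$. An alternative, if one prefers to avoid partitions of unity, is to invoke a Michael-type continuous selection / continuous-approximation theorem for upper semicontinuous maps to directly produce a continuous $G$ squeezed between $F(\cdot)$ and a slightly inflated version; I would mention this as the cleaner route if such a lemma is available in the paper's cited sources.
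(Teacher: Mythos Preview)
Your construction has a real gap in the continuity claim. The map
\[
G(x) = \co\Big\{\textstyle\bigcup_{k:\psi_k(x)>0} A_k\Big\}, \qquad A_k := \co\{F(U_k + \tfrac14\varepsilon(x_k)\mathbb{B})\},
\]
is lower semicontinuous for the reason you give, but it is \emph{not} upper semicontinuous in general: if $x_j \to x$ with $\psi_k(x_j)>0$ for all $j$ but $\psi_k(x)=0$, the fixed set $A_k$ contributes to every $G(x_j)$ yet is dropped from $G(x)$, and nothing forces $A_k\subset G(x)$. A two-ball picture already breaks it: overlapping $U_1,U_2$, a point $x\in\partial\{\psi_2>0\}$ with $\psi_1(x)=1$, and $F$ chosen so that $A_2\not\subset A_1$. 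Your sentence ``outer semicontinuity \ldots\ from closedness of finite unions'' overlooks that the active index set can strictly shrink in the limit. Note, too, that you introduce a partition of unity but never use its \emph{values}---only its supports.

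The repair is to make the weights work: define instead $G(x):=\sum_k \psi_k(x)\,\overline{A_k}$ as a Minkowski sum (and take closed convex hulls so that each $\overline{A_k}$ is compact). Locally this is a finite sum of maps $x\mapsto\psi_k(x)\overline{A_k}$, each Hausdorff-continuous, so $G$ is continuous with compact convex images. The inclusion $F(x)\subset G(x)$ holds because any $\eta\in F(x)$ lies in every active $\overline{A_k}$, whence $\eta=\sum_k\psi_k(x)\eta\in G(x)$; and $G(x)\subset\co\{\bigcup_{\psi_k(x)>0}\overline{A_k}\}\subset\co\{F(x+\varepsilon(x)\mathbb{B})\}$ by your own radius bookkeeping. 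With this fix your route is more elementary than the paper's, which instead grids $\mathbb{R}^n$ by compacta $\{I_i\}$, invokes Aseev's graph-approximation theorem on each $I_i$ to squeeze a continuous $G_{I_i}$ between $F$ and its inflation, patches these into a merely lower semicontinuous $G_o$, and then upgrades $G_o$ to a continuous $G$ via a second Aseev-type result---essentially the ``alternative'' you mention at the end.
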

The latter proposition allows us to prove the converse strong robust-safety theorem,  using the converse robust-safety theorem established in \cite{RubSafPII},  under the following mild assumption: 
\begin{assumption} \label{ass3}
\textcolor{blue}{$\cl(X_o) \cap \cl (X_u) = \emptyset$.}
\end{assumption}
Different from the converse robust safety theorem in \cite{RubSafPII},  in this case,  we do not require $F$ to be continuous.  
\begin{theorem} \label{thm4}
Consider the differential inclusion $\Sigma$ in \eqref{eq1}, with  $F$ satisfying 
Assumption \ref{ass1}.  Consider the  initial and unsafe sets $(X_o, X_u) \subset   \mathbb{R}^n \times \mathbb{R}^n$ such that Assumption \ref{ass3} holds. 
Then,  the following statements are equivalent:
\begin{enumerate} [label={S\ref{thm4}\arabic*.},leftmargin=*]
\item \label{item:thm4_S1} $\Sigma$ is  strongly robustly safe with respect to $(X_o,X_u)$.
\item  \label{item:thm4_S2} There exists a continuously differentiable barrier function candidate $B$ such that  \eqref{eqrsafe} holds. 
\end{enumerate} 
\end{theorem}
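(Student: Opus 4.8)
The plan is to prove Theorem~\ref{thm4} by establishing the two implications separately, relying crucially on Proposition~\ref{lemA444} to reduce the set-valued case to the continuous case already handled in \cite{RubSafPII}.

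\textbf{Proof of \ref{item:thm4_S2}$\Rightarrow$\ref{item:thm4_S1}.} This direction is essentially contained in Theorem~\ref{thm1}. Assuming \ref{item:thm4_S2}, the barrier candidate $B$ is continuously differentiable and satisfies \eqref{eqrsafe}, which is exactly condition \ref{item:cond2}. Since $F$ satisfies Assumption~\ref{ass1}, Theorem~\ref{thm1} applies directly and yields that $\Sigma$ is strongly robustly safe with respect to $(X_o,X_u)$. So this direction is immediate.

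\textbf{Proof of \ref{item:thm4_S1}$\Rightarrow$\ref{item:thm4_S2}.} Suppose $\Sigma$ is strongly robustly safe: there is a continuous $\varepsilon:\mathbb{R}^n\to\mathbb{R}_{>0}$ such that $\Sigma^s_\varepsilon$, with right-hand side $x\mapsto\co\{F(x+\varepsilon(x)\mathbb{B})\}+\varepsilon(x)\mathbb{B}$, is safe with respect to $(X_o,X_u)$. Apply Proposition~\ref{lemA444} with this $\varepsilon$: there is a continuous set-valued map $G:\mathbb{R}^n\rightrightarrows\mathbb{R}^n$ with convex compact images satisfying $F(x)\subset G(x)\subset\co\{F(x+\varepsilon(x)\mathbb{B})\}+\varepsilon(x)\mathbb{B}$ for all $x$. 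The right inclusion shows that every solution to $\dot x\in G(x)$ is also a solution to $\Sigma^s_\varepsilon$; hence the system $\Sigma_G:\dot x\in G(x)$ is safe with respect to $(X_o,X_u)$. I then need to upgrade this to \emph{robust} safety of $\Sigma_G$ in the sense of Definition~\ref{def-RegRobSafe}: since $G(x)\subset\co\{F(x+\varepsilon(x)\mathbb{B})\}+\varepsilon(x)\mathbb{B}$ and, by continuity of $x\mapsto F(x+\varepsilon(x)\mathbb{B})$ (here Assumption~\ref{ass1} and Lemma~\ref{lemA333} enter), one can find a continuous positive $\delta$ with $G(x)+\delta(x)\mathbb{B}\subset\co\{F(x+\varepsilon(x)\mathbb{B})\}+\varepsilon(x)\mathbb{B}$ for all $x$; consequently $\Sigma_{G,\delta}:\dot x\in G(x)+\delta(x)\mathbb{B}$ is safe, i.e.\ $\Sigma_G$ is robustly safe. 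Now $G$ is continuous with convex compact nonempty images, so $G$ satisfies Assumption~\ref{ass2} relative to the sets $(X_o,X_u)$, which satisfy Assumption~\ref{ass3} by hypothesis. Therefore the converse robust-safety theorem of \cite{RubSafPII} applies to $\Sigma_G$: there exists a continuously differentiable barrier function candidate $B$ with respect to $(X_o,X_u)$ such that $\langle\nabla B(x),\eta\rangle<0$ for all $\eta\in G(x)$ and all $x\in\partial K$, where $K=\{x:B(x)\le0\}$. Finally, since $F(x)\subset G(x)$ for all $x$, this inequality holds a fortiori for all $\eta\in F(x)$, $x\in\partial K$, which is precisely \eqref{eqrsafe}. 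Hence \ref{item:thm4_S2} holds.

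\textbf{Main obstacle.} The delicate point is the passage from ``$\Sigma_G$ is safe'' to ``$\Sigma_G$ is robustly safe,'' i.e.\ producing a strictly positive continuous margin $\delta$ with $G(x)+\delta(x)\mathbb{B}$ still inside the strongly perturbed right-hand side. This requires that the outer map $x\mapsto\co\{F(x+\varepsilon(x)\mathbb{B})\}+\varepsilon(x)\mathbb{B}$ be ``strictly larger'' than $G$ in a pointwise-uniform but possibly $x$-dependent way; the $+\varepsilon(x)\mathbb{B}$ slack and the freedom in choosing $G$ in Proposition~\ref{lemA444} should give exactly the room needed, but care is required because $\varepsilon$ may vanish at infinity, so the margin must be allowed to be state-dependent rather than constant. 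A secondary, purely bookkeeping obstacle is checking that $G$ inherits the nonemptiness and convexity/compactness of images needed to invoke the theorem of \cite{RubSafPII}, which Proposition~\ref{lemA444} already guarantees.
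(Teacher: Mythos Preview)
Your overall strategy matches the paper's exactly: for \ref{item:thm4_S2}$\Rightarrow$\ref{item:thm4_S1} invoke Theorem~\ref{thm1}; for \ref{item:thm4_S1}$\Rightarrow$\ref{item:thm4_S2} sandwich a continuous $G$ between $F$ and the strongly perturbed right-hand side via Proposition~\ref{lemA444}, deduce robust safety of $\Sigma_G$, apply the converse theorem of \cite{RubSafPII} to $G$, and then use $F\subset G$.

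The gap is precisely where you flagged it, and your proposed resolution does not work. You appeal to Lemma~\ref{lemA333} and to ``continuity of $x\mapsto F(x+\varepsilon(x)\mathbb{B})$'', but Lemma~\ref{lemA333} \emph{requires $F$ to be continuous}, whereas the whole point of Theorem~\ref{thm4} (as opposed to the result in \cite{RubSafPII}) is that $F$ satisfies only Assumption~\ref{ass1}, i.e.\ upper semicontinuity. Under Assumption~\ref{ass1} alone you have no control on how much room is left between $G(x)$ and $\co\{F(x+\varepsilon(x)\mathbb{B})\}+\varepsilon(x)\mathbb{B}$; Proposition~\ref{lemA444} as stated allows $G$ to coincide with the outer set, in which case no positive $\delta$ exists.

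The paper closes this gap with a one-line trick you missed: apply Proposition~\ref{lemA444} with $\varepsilon/2$ rather than $\varepsilon$. This yields a continuous $G$ with
\[
F(x)\subset G(x)\subset \co\{F(x+\varepsilon(x)\mathbb{B})\}+(\varepsilon(x)/2)\mathbb{B}\qquad\forall x\in\mathbb{R}^n,
\]
so that $G(x)+(\varepsilon(x)/2)\mathbb{B}\subset \co\{F(x+\varepsilon(x)\mathbb{B})\}+\varepsilon(x)\mathbb{B}$ automatically, and $\varepsilon/2$ is the desired robust-safety margin for $\Sigma_G$. No continuity of $F$ and no Lemma~\ref{lemA333} are needed.
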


\begin{proof}  The proof that \ref{item:thm4_S2} implies \ref{item:thm4_S1} follows from Theorem \ref{thm1}. 

To prove that \ref{item:thm4_S1} implies \ref{item:thm4_S2},  we assume that $\Sigma$ is strongly robustly safe  with respect to $(X_o,X_u)$ and we let $\varepsilon$ be a strong robust-safety margin.  By virtue of Proposition  \ref{lemA444},  we conclude the existence of  a continuous set-valued map $G : \mathbb{R}^n \rightrightarrows \mathbb{R}^n$ with convex and compact images such that
\begin{align*}
F(x) \subset G(x) \subset \co \{ F(x + \varepsilon(x) \mathbb{B}) \} +  (\varepsilon(x)/2) \mathbb{B} ~~~ \forall  x \in \mathbb{R}^n.
\end{align*}
Note that strong robust safety of $\Sigma$ implies robust safety of 
$$ \Sigma_g :   \dot{x} \in G(x) \quad x \in \mathbb{R}^n $$
 with $\varepsilon/2$ as a robustness margin; namely,  the system 
\begin{equation*}
\begin{aligned}
\Sigma_{g \varepsilon}: \dot{x} \in G(x) + (\varepsilon(x)/2) \mathbb{B}  \qquad  x \in \mathbb{R}^n
\end{aligned}
\end{equation*}
is safe.
Hence,  using  \cite{RubSafPII}, we conclude that the existence of a continuously differentiable barrier function candidate $B$ such that 
$ \langle \nabla B (x), \eta \rangle < 0$ for all $(\eta,x) \in (G(x),  \partial K).  $
The latter completes the proof since $F \subset G$. 
\end{proof}

\section{Uniform Strong Robust Safety} \label{sec.5}

In this section,  we strengthen the sufficient conditions in Theorem \ref{thm1} to conclude uniform strong robust safety.   In the following theorem, we analyze particular situations,  where uniform strong robust safety is equivalent to strong robust safety.  

\begin{theorem} \label{thm2}
Consider the differential inclusion $\Sigma$ such that Assumption \ref{ass1} holds. 
Let $B : \mathbb{R}^n \rightarrow \mathbb{R}$ be a barrier function candidate with respect to $(X_o, X_u)$. Then, $\Sigma$ is uniformly strongly robustly safe with respect to $(X_o,X_u)$ if one of the following conditions holds:
\begin{enumerate} [label={C\ref{thm2}\arabic*.},leftmargin=*]
\item \label{item:cond2uu} 
There exists a continuous function $\varepsilon : \mathbb{R}^n \rightarrow \mathbb{R}_{>0}$ such that the set $K$ is forward invariant for 
$\Sigma^s_\varepsilon$ and the set $\partial K$ is bounded.
\item \label{item:cond3uu} 
$\Sigma$ is strongly robustly safe with respect to $(X_o,X_u)$ and either $\mathbb{R}^n \backslash X_u$ or $\mathbb{R}^n \backslash X_o$ is bounded.
\end{enumerate}
\end{theorem}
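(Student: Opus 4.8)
The plan is to reduce both cases to the elementary observation that a continuous, strictly positive function restricted to a \emph{compact} set admits a strictly positive lower bound; this lets us trade the function-valued strong robust-safety margin for a constant one. The point is that in each case all the behaviour relevant to safety takes place inside a bounded region: a neighbourhood of the bounded boundary $\partial K$ under \ref{item:cond2uu}, and the bounded set $\cl(\mathbb{R}^n\setminus X_u)$ (respectively $\cl(\mathbb{R}^n\setminus X_o)$) under \ref{item:cond3uu}.

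Under \ref{item:cond2uu}, the boundary $\partial K$ is topologically closed and, by hypothesis, bounded, hence compact. I would set $2c := \min_{x\in\partial K}\varepsilon(x) > 0$ and $V := \{x\in\mathbb{R}^n : \varepsilon(x) > c\}$, an open neighbourhood of $\partial K$. For $x\in V$ we have $c\mathbb{B}\subset\varepsilon(x)\mathbb{B}$, hence $F(x+c\mathbb{B})\subset F(x+\varepsilon(x)\mathbb{B})$, and therefore, taking convex hulls and adding the balls,
\begin{equation*}
\co\{F(x+c\mathbb{B})\} + c\mathbb{B} \;\subset\; \co\{F(x+\varepsilon(x)\mathbb{B})\} + \varepsilon(x)\mathbb{B} \qquad \forall x\in V .
\end{equation*}
Consequently, every solution to $\Sigma^s_c$ whose range lies in $V$ is also a solution to $\Sigma^s_\varepsilon$. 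I would then show that $K$ is forward invariant for $\Sigma^s_c$: if a solution $\phi$ to $\Sigma^s_c$ with $\phi(0)\in K$ satisfied $\phi(t_1)\notin K$, then $t^\star := \sup\{t\le t_1 : \phi([0,t])\subset K\}$ would satisfy $\phi(t^\star)\in\partial K\subset V$; by continuity $\phi$ would stay in $V$ on some $[t^\star,t^\star+\tau]$, hence be a solution to $\Sigma^s_\varepsilon$ there starting in $K$, so forward invariance of $K$ for $\Sigma^s_\varepsilon$ would force $\phi([t^\star,t^\star+\tau])\subset K$, contradicting the definition of $t^\star$; this is exactly the localisation of forward invariance to a neighbourhood of $\partial K$ already present in \ref{item:star1} / Lemma~\ref{lemA8}. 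Since $\Sigma^s_c$ is well posed ($x\mapsto\co\{F(x+c\mathbb{B})\}+c\mathbb{B}$ inherits Assumption~\ref{ass1} from $F$, exactly as argued for the map $H$ in the proof of Theorem~\ref{thm1}), and since $X_o\subset K$ and $K\cap X_u=\emptyset$, forward invariance of $K$ yields safety of $\Sigma^s_c$, i.e.\ uniform strong robust safety of $\Sigma$ with the constant margin $c$.

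Under \ref{item:cond3uu}, let $\varepsilon$ be a strong robust-safety margin, so $\Sigma^s_\varepsilon$ is safe. If $\mathbb{R}^n\setminus X_u$ is bounded, set $\hat W := \cl(\mathbb{R}^n\setminus X_u)$, compact, and $c := \min_{x\in\hat W}\varepsilon(x) > 0$: every solution to $\Sigma^s_\varepsilon$ from $X_o$ stays in $\mathbb{R}^n\setminus X_u\subset\hat W$, and for a solution $\phi$ to $\Sigma^s_c$ from $x_o\in X_o$, as long as $\phi$ remains in $\hat W$ it satisfies $\varepsilon(\phi(t))\ge c$, hence solves $\Sigma^s_\varepsilon$ and so (by safety of $\Sigma^s_\varepsilon$) cannot reach $\mathbb{R}^n\setminus\hat W$; a first-exit-time argument then shows that $\phi$ never leaves $\hat W$ and never reaches $X_u$, so $\Sigma^s_c$ is safe. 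If instead $\mathbb{R}^n\setminus X_o$ is bounded, then $X_u\subset\mathbb{R}^n\setminus X_o$ is bounded; with $\hat U := \cl(\mathbb{R}^n\setminus X_o)$ compact and $c := \min_{x\in\hat U}\varepsilon(x) > 0$, a $\Sigma^s_c$-trajectory from $x_o\in X_o$ that reached $X_u$ would, after its last exit from $\hat U$ before that time, lie entirely in $\hat U$ and hence solve $\Sigma^s_\varepsilon$ from a point of $\cl(X_o)$; since safety of $\Sigma^s_\varepsilon$ extends from $X_o$ to $\cl(X_o)$ by the upper semicontinuity of the solution map under Assumption~\ref{ass1}, this is a contradiction, so $\Sigma^s_c$ is safe. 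In both sub-cases $\Sigma$ is uniformly strongly robustly safe. Alternatively, when $\cl(X_o)\cap\cl(X_u)=\emptyset$ one may invoke Theorem~\ref{thm4} to obtain a continuously differentiable barrier certificate whose zero-sublevel set $K$ satisfies $X_o\subset K$, $K\cap X_u=\emptyset$ and has bounded boundary in either sub-case (it is contained in $\mathbb{R}^n\setminus X_u$, respectively its complement is contained in $\mathbb{R}^n\setminus X_o$), and then apply \ref{item:cond2uu}.

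\textbf{Main obstacle.} The delicate point is the \emph{localisation}: justifying that it suffices to make the margin constant on a bounded region --- a neighbourhood of $\partial K$ in \ref{item:cond2uu}, and $\cl(\mathbb{R}^n\setminus X_u)$ or $\cl(\mathbb{R}^n\setminus X_o)$ in \ref{item:cond3uu} --- because any safety violation is ``witnessed'' inside that region. Making this rigorous requires the first-exit-time bookkeeping indicated above, and, for the sub-case $\mathbb{R}^n\setminus X_o$ bounded, upgrading safety from $X_o$ to $\cl(X_o)$ via semicontinuity of the solution map, together with the usual care about whether $X_u$ is open (otherwise one works with $\mbox{int}(X_u)$ and $\cl(X_u)$ where appropriate). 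The remaining ingredients --- monotonicity of the strongly perturbed right-hand side with respect to the margin, well-posedness of $\Sigma^s_c$, and the passage from forward invariance of $K$ to safety via $X_o\subset K$ and $K\cap X_u=\emptyset$ --- are routine.
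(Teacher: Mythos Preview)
Your treatment of \ref{item:cond2uu} is essentially the paper's own argument: take the infimum of $\varepsilon$ over a compact neighbourhood of $\partial K$, and show by a first-exit contradiction that a $\Sigma^s_c$-solution crossing $\partial K$ would locally be a $\Sigma^s_\varepsilon$-solution violating forward invariance of $K$.

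For \ref{item:cond3uu} the paper takes a different and more economical route. Rather than splitting into the two sub-cases and running separate first-exit arguments, it constructs the \emph{reachable set}
\[
K_\varepsilon := \{\phi(t) : t\in\dom\phi,\ \phi\in\mathcal{S}_{\Sigma^s_\varepsilon}(x),\ x\in X_o\},
\]
observes that $K_\varepsilon$ is forward invariant for $\Sigma^s_\varepsilon$ with $X_o\subset K_\varepsilon$ and $K_\varepsilon\cap X_u=\emptyset$, and then notes that boundedness of $\mathbb{R}^n\setminus X_u$ (which contains $K_\varepsilon$) or of $\mathbb{R}^n\setminus X_o$ (which contains $\mathbb{R}^n\setminus K_\varepsilon$) forces $\partial K_\varepsilon$ to be bounded. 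This reduces \ref{item:cond3uu} directly to the situation of \ref{item:cond2uu}, with $K_\varepsilon$ playing the role of $K$. No barrier function, no case split, and no closure-of-$X_o$ issue.

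Your direct approach is workable, but one step is not correctly justified. In the sub-case $\mathbb{R}^n\setminus X_o$ bounded you appeal to ``upper semicontinuity of the solution map'' to extend safety of $\Sigma^s_\varepsilon$ from $X_o$ to $\cl(X_o)$. Upper semicontinuity goes the wrong way: it says that limits of solutions are solutions, not that every solution from a boundary point is a limit of solutions from $X_o$. What you actually need is the opposite (lower) direction, which does not follow from Assumption~\ref{ass1} alone. The argument is easily repaired without this claim: fatten $\hat U$ to $\hat U_\delta := \hat U + \delta\mathbb{B}$ (still compact), take $c := \min_{\hat U_\delta}\varepsilon$, and note that just before the last entry of $\phi$ into $\hat U$ the trajectory lies in $\mbox{int}(X_o)\subset X_o$ and, by continuity, already in $\hat U_\delta$; hence there is a tail of $\phi$ that is a $\Sigma^s_\varepsilon$-solution starting \emph{in} $X_o$ and reaching $X_u$, contradicting safety. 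A similar fattening also cleans up the boundary case in your $\mathbb{R}^n\setminus X_u$ argument. Your alternative via Theorem~\ref{thm4} is fine but imports Assumption~\ref{ass3}, which the paper's proof does not need.
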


\begin{proof}
\begin{itemize}
\item Under \ref{item:cond2uu} and when $\partial K$ is bounded,   \blue{ we show that 
$\epsilon^* := \inf \{ \epsilon(x) : x \in U(\partial K) \} > 0$,  for $U(\partial K)$ a neighborhood of $\partial K$,   is a strong robust-safety margin using contradiction.  Indeed,  assume that there exists a solution 
$\phi$ to $\Sigma^s_{\epsilon^*}$ starting from $x_o \in X_o$ that reaches the set $X_u$ in finite time.  
Namely,  there exists $t_1$,  $t_2 \in \dom \phi$ such that $t_2 > t_1 \geq 0$,  $\phi(t_2) \in U(\partial K) \backslash K$,  $\phi(t_1) \in U(\partial K) \cap K$, 
$\phi(0) \in X_o$,  and $\phi([t_1,t_2]) \subset U(\partial K)$.  The latter implies that $\phi$,  restricted to the interval $[t_1,t_2]$,  is also a solution to 
$\Sigma^s_\epsilon$.   However, since the set $K$ is forward invariant for $\Sigma^s_{\epsilon}$,  we conclude that the solution $\phi([t_1,t_2])$ must lie within the set $K$, which yields to a contradiction.   }

\item  Under \ref{item:cond3uu}, we conclude the existence of a continuous function $\varepsilon : \mathbb{R}^n \rightarrow \mathbb{R}_{>0}$ such that $\Sigma^s_\varepsilon$ is safe with respect to $(X_o,X_u)$. Hence, the set 
$$ K_{\varepsilon} := \{ \phi(t) : t \in \dom \phi,~\phi \in \mathcal{S}_{\Sigma^s_\varepsilon}(x),~ x \in X_o \},  $$ 
where $\mathcal{S}_{\Sigma^s_\varepsilon}(x)$ is the set of maximal solutions to $\Sigma^s_\varepsilon$ starting from $x$, is forward invariant for $\Sigma^s_\varepsilon$, $K_{\varepsilon} \cap X_u = \emptyset$, and $X_o \subset K_{\varepsilon}$. 
Furthermore, when either the complement of $X_u$ is bounded or the complement of $X_o$ is bounded, we conclude that $\partial K_{\varepsilon}$ is bounded. Hence, the rest of the proof follows as in the proof of \ref{item:cond2uu}. 
\end{itemize}
\end{proof}

In the next theorem,  we propose a set of infinitesimal conditions to guarantee uniform strong robust safety without using the boundedness assumptions in \ref{item:cond2uu} and \ref{item:cond3uu}.  
Before doing so,  we recall a tool that allows us to consider semicontinuous barrier functions \cite{clarke2008nonsmooth}. 
\begin{definition}[Proximal subdifferential]  \label{defps}
The proximal subdifferential of $B : \mathbb{R}^n \rightarrow \mathbb{R}$ at $x$ is the set-valued map 
$\partial_P B : \mathbb{R}^n \rightrightarrows \mathbb{R}^n$ given by
\begin{align*}
\partial_P B(x) := \left\{ \zeta \in \mathbb{R}^n : [\zeta^\top~-1]^\top \in N^P_{\epi B} (x, B(x)) \right\},
\end{align*}
where $ \epi B := \left\{(x, r) \in \mathbb{R}^n \times \mathbb{R} : r \geq B(x) \right\}$ \blue{and,  given  a closed subset $S \subset \mathbb{R}^{n+1}$ and $y \in \mathbb{R}^{n+1}$, 
$$ N_S^P(y) := \left\{ \zeta \in \mathbb{R}^{n+1} : \exists r > 0~\mbox{s.t.}~|y+r\zeta|_{S} = r 
|\zeta| \right\}. $$}
\end{definition}
\begin{remark}
\blue{When $B$ is twice continuously differentiable at 
$x \in \mathbb{R}^n$,    then 
$\partial_P B(x) = \left\{ \nabla B(x) \right\}$.
Moreover,  the latter equality is also true  if $B$ is continuously differentiable at $x$ and $\partial_P B(x) \neq \emptyset$. }
\end{remark}
Furthermore,  we consider the following assumption:
\begin{assumption}\label{assusrs}
\blue{There exists  $\lambda_2 : \mathbb{R}^n \to \mathbb{R}_{\geq 0}$ continuous,  which is known,  and there exists  $\lambda_1 : \mathbb{R} \to \mathbb{R}_{\geq 0} $ continuous with 
$\lambda_1(0) = 0$ such that
\begin{align} \label{eqass}
\hspace{-0.3cm} F(x+\delta \mathbb{B}) \subset F(x) + \lambda_1(\delta) \lambda_2(x) \mathbb{B} \quad \forall x \in \partial K,  ~ \forall  \delta > 0.
\end{align} }
\end{assumption}

Note that Assumption \ref{assusrs} implies continuity of $F$ in the set-valued sense.   Furthermore,  we will show in the following result that the continuity of $F$ implies the existence of continuous functions $\lambda_1 : \mathbb{R} \to \mathbb{R}_{\geq 0} $ and 
$\lambda_2 : \mathbb{R}^n \to \mathbb{R}_{\geq 0}$, with $\lambda_1(0) = 0$,  such that  \eqref{eqass} holds.  However,  we do not guarantee the explicit knowledge  of  $\lambda_2$ unless the following extra assumption is made.

\begin{assumption} \label{assusrs+}
There exists  
$g_o : (-\infty , \bar{a}] \to [0,+\infty)$ nonincreasing and continuous,  \blue{which is known},  such that 
\begin{align*} 
\graph (g_o)  := \{(a,b) \in \mathbb{R} \times \mathbb{R}  :  c(a,b) = -b \}, 
\end{align*}
where $\bar{a} := \sup \{a \in \mathbb{R} : c(a,0)=0\}$, 
$$c(a,b) := \log \left( \max_{ \{|y|< \exp(a),  s \leq \exp(b)\} } g(y,s) \right),   $$
and  $g(y,s) := |F(y+s \mathbb{B}) - F(y)|_H - |F(s \mathbb{B}) - F(0)|_H. $
\end{assumption}

The following result summarizes the aforementioned discussion.  The proof is in the Appendix. 

\begin{proposition} \label{lem2p} 
For every continuous set-valued map $F:\mathbb{R}^n \rightrightarrows \mathbb{R}^n$, there exist two continuous functions $\lambda_1 : \mathbb{R} \to \mathbb{R}_{\geq 0} $ and $\lambda_2 : \mathbb{R}^n \to \mathbb{R}_{\geq 0}$, with $\lambda_1(0) = 0$, such that  \eqref{eqass} holds.     Moreover,  when Assumption \ref{assusrs+} holds.  Then, we can take 
$ \lambda_2 (x) := \exp \left( \max\{g,h\} (\ln{|x|}) \right) + 1,  $
where 
\begin{equation*}
    \begin{aligned}
        h(\ln{|x|}) & := \sup \{ c(a,\ln{|x|}) - g(a)  : a \in \mathbb{R} \},
\\
 g(a) & :=   
 \begin{cases}
-\frac{1}{2}g_o(a)  & \text{if} \;\ a \leq \bar{a},
\\
c(a,a) - c(\bar{a},\bar{a}) + a -\bar{a} & \text{if} \;\ a > \bar{a}. 
\end{cases}
\end{aligned}
\end{equation*}
\end{proposition}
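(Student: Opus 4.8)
The plan is to split the proposition into its two assertions and to treat them separately. First I would establish existence of the pair $(\lambda_1,\lambda_2)$ for an arbitrary continuous $F$. Since continuity of $F$ at a point $x$ means in particular upper semicontinuity, for every $x$ and every $\delta>0$ one has $F(x+\delta\mathbb{B})\subset F(x)+r(x,\delta)\mathbb{B}$ where $r(x,\delta):=|F(x+\delta\mathbb{B})-F(x)|_H$ is the smallest such radius. The point is to dominate $r(x,\delta)$ by a product $\lambda_1(\delta)\lambda_2(x)$ with $\lambda_1$ continuous, $\lambda_1(0)=0$, and $\lambda_2$ continuous. The natural normalization is to subtract off the behaviour at the origin: set $g(y,s):=|F(y+s\mathbb{B})-F(y)|_H-|F(s\mathbb{B})-F(0)|_H$ as in Assumption \ref{assusrs+}, take $\lambda_1(\delta):=|F(\delta\mathbb{B})-F(0)|_H+\delta$ (this is continuous, and $\lambda_1(0)=0$ by upper semicontinuity of $F$ at $0$, with the extra $+\delta$ ensuring strict positivity for $\delta>0$ so that division is legitimate), and then one must produce a continuous $\lambda_2$ with $r(x,\delta)\le \lambda_1(\delta)\lambda_2(x)$, i.e. with $\lambda_2(x)\ge \sup_{\delta>0} r(x,\delta)/\lambda_1(\delta)$. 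That supremum is finite for each fixed $x$ (because for small $\delta$, $r(x,\delta)\le \lambda_1(\delta)-\delta+\,o(1)$ roughly tracks the origin term plus a controlled excess, while for $\delta$ bounded away from $0$ the denominator is bounded below), but it need not be continuous or even upper semicontinuous in $x$; so I would instead invoke a Kat\v{e}tov-type insertion result (as already used in the proof of Theorem \ref{thm1}, \cite{katvetov1951real}) or simply Hausdorff's argument to find a continuous $\lambda_2$ dominating the (locally bounded, measurable) map $x\mapsto\sup_{\delta>0} r(x,\delta)/\lambda_1(\delta)$. Local boundedness of this map on compacta is the technical heart: it follows from joint continuity of $(x,\delta)\mapsto r(x,\delta)$ away from $\delta=0$ together with the asymptotic estimate as $\delta\to 0^+$.

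For the second assertion I would unwind the definitions in Assumption \ref{assusrs+}. The function $c(a,b):=\log\big(\max_{|y|<\exp(a),\,s\le \exp(b)} g(y,s)\big)$ is the logarithm of the worst-case normalized modulus over the ball of radius $\exp(a)$ and scale up to $\exp(b)$; it is nondecreasing in both arguments and continuous. Working in logarithmic coordinates $a=\ln|x|$, $b=\ln\delta$, the bound $r(x,\delta)\le \lambda_1(\delta)\lambda_2(x)$ becomes, after taking logs and absorbing the $+1$ and the $\lambda_1$ offset, a statement that $c(\ln|x|,\ln\delta)$ is bounded above by $\ln\lambda_2(x)+\ln\lambda_1(\delta)+\text{const}$. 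The Legendre-type construction $h(\ln|x|):=\sup_a\{c(a,\ln|x|)-g(a)\}$ with the stated piecewise $g$ is precisely the device that separates the two variables: $g$ is built from the given $g_o$ on $(-\infty,\bar a]$ (where $\graph(g_o)=\{(a,b):c(a,b)=-b\}$ pins down the diagonal behaviour) and extended for $a>\bar a$ so that $g(a)$ grows at least like $c(a,a)+a$, guaranteeing the supremum defining $h$ is finite and continuous. Then $\lambda_2(x):=\exp(\max\{g,h\}(\ln|x|))+1$ is continuous, and one checks by construction that $c(a,\ln|x|)\le g(a)+h(\ln|x|)\le \max\{g,h\}(\ln|x|)+g(a)$ for all relevant $a$, which re-exponentiated delivers exactly \eqref{eqass} with the claimed $\lambda_2$.

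The main obstacle I anticipate is not any single estimate but verifying that the explicit $\lambda_2$ of the second part is (i) well-defined — i.e. the supremum in $h$ is attained/finite for every $x$, which forces a careful accounting of the growth of $c(a,b)$ in $a$ and is exactly why the $a>\bar a$ branch of $g$ is needed — and (ii) genuinely continuous, which requires that $c$ be not merely semicontinuous but continuous and that the sup over $a$ be a locally uniform one. I would therefore spend the bulk of the write-up proving the regularity of $c$ (monotonicity plus continuity of $a\mapsto \max_{|y|<\exp(a)}(\cdot)$, using compactness of closed balls and continuity of $F$) and then a short lemma that a pointwise-finite supremum of an equicontinuous-in-the-parameter family is continuous. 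The first assertion, by contrast, is comparatively soft: it is a Kat\v{e}tov insertion argument once local boundedness of the normalized modulus is in hand. The details of this proof are deferred to the Appendix, as indicated.
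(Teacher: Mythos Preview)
Your treatment of the second assertion is essentially the paper's argument: pass to logarithmic coordinates, build $g$ from $g_o$ on $(-\infty,\bar a]$ and extend it affinely above $\bar a$ so that $a\mapsto c(a,b)-g(a)$ tends to $-\infty$ at both ends, define $h$ as the sup, and read off $\alpha=\exp(\max\{g,h\}\circ\ln)$. That part is fine.

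The genuine gap is in your handling of the first assertion. You fix $\lambda_1(\delta):=|F(\delta\mathbb{B})-F(0)|_H+\delta$ a priori and then try to choose $\lambda_2(x)\ge\sup_{\delta>0}r(x,\delta)/\lambda_1(\delta)$. But that supremum is \emph{not} finite in general: the modulus of continuity of $F$ at a point $x$ can be arbitrarily worse than at the origin. For a concrete single-valued example, take $f\equiv 0$ on a neighbourhood of $0$ (so $\lambda_1(\delta)=\delta$ for small $\delta$) and $f(y)=\sqrt{|y-x_0|}$ near some $x_0\neq 0$; then $r(x_0,\delta)/\lambda_1(\delta)=\delta^{-1/2}\to\infty$. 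Your heuristic ``$r(x,\delta)\le\lambda_1(\delta)-\delta+o(1)$ roughly tracks the origin term'' is exactly what fails: there is no a priori comparison between $r(x,\cdot)$ and $r(0,\cdot)$. A Kat\v{e}tov insertion cannot rescue an infinite pointwise supremum, and even where finite, insertion above a merely locally bounded (not upper semicontinuous) function is not what Kat\v{e}tov's theorem provides.

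The paper does \emph{not} treat the first assertion as soft and separate; it uses the very same Legendre-type separation for both parts. It sets $\beta(r,\delta):=\max_{|y|\le r,\,s\le\delta}g(y,s)$, observes that $\beta$ is continuous, nondecreasing in each argument, and vanishes on both axes, and then applies the Sontag-style factorisation (as in \cite{iisstac}) to produce a single $\alpha$ with $\beta(r,\delta)\le\alpha(r)\alpha(\delta)$. Only then does it set $\lambda_1(\delta):=\max\{\alpha(\delta),|F(\delta\mathbb{B})-F(0)|_H\}$ and $\lambda_2(x):=\alpha(|x|)+1$. The point is that $\lambda_1$ cannot be chosen before the global analysis: it must already encode the worst modulus of continuity of $F$ over all of $\mathbb{R}^n$, which is precisely what $\alpha$ does. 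The second assertion is then nothing more than writing $\alpha$ explicitly once $g_o$ is known, so your assessment that the first part is the easy one is inverted.
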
 

At this point,  we present the main result of this section. 

\begin{theorem} \label{thm3}
Consider the differential inclusion $\Sigma$ such that Assumptions \ref{ass1} and \ref{assusrs} hold. 
Let $B : \mathbb{R}^n \rightarrow \mathbb{R}$ be a barrier function candidate with respect to $(X_o, X_u)$. 
Then, $\Sigma$ is uniformly strongly robustly safe with respect to $(X_o,X_u)$ if one of the following conditions holds:

\begin{enumerate} [label={C\ref{thm3}\arabic*.},leftmargin=*]

\item \label{item:cond1uuu}  $B$ is continuously differentiable and 
\begin{align*}
\hspace{-0.8cm} \inf \left\{ \frac{\langle \nabla B (x), -F(x) \rangle / \lvert \nabla B(x) \rvert}{1+ \lambda_2(x)} :  x \in  \partial K  \right\} > 0.
\end{align*}

\item \label{item:cond2uuu} 
$B$ is locally Lipschitz and
\begin{align*}
\hspace{-0.8cm} \inf \left\{ \frac{\langle \zeta , -F(x) \rangle/\lvert \zeta \rvert}{1 +  \lambda_2(x)} : (\zeta,x) \in  \partial_C B(x)  \times \partial K \right\} > 0.
 \end{align*}

\item \label{item:cond3uuu} 
 $B$ is lower semicontinuous, $F$ is locally Lipschitz, and  
\begin{align*}
\hspace{-0.8cm} \inf \left\{ \frac{\langle \zeta , -F(x) \rangle/\lvert \zeta \rvert}{1 + \lambda_2(x)}:
 (\zeta,x) \in   \partial_P B (x) \times  U(\partial K) \right\}  > 0.
\end{align*}

\item \label{item:cond4uuu} $B$ is upper semicontinuous, $F$ is locally Lipschitz, $\cl(K) \cap X_u = \emptyset$, and  
\begin{align*}
\hspace{-0.8cm} \inf \left\{ \frac{\langle \zeta, F(x) \rangle/\lvert \zeta\rvert}{1 + \lambda_2(x)}  :  (\zeta,x) \in   \partial_P (-B(x)) \times  U(\partial K)  \right\}  > 0.
\end{align*}
\end{enumerate}
\end{theorem}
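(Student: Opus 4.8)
The plan is to produce, for whichever of \ref{item:cond1uuu}--\ref{item:cond4uuu} is assumed, a single \emph{constant} $\varepsilon^{*}>0$ such that the zero-sublevel set $K$ in \eqref{eq.4} is forward invariant for $\Sigma^{s}_{\varepsilon^{*}}$. Since $B$ is a barrier function candidate we have $X_{o}\subset K$ and $K\cap X_{u}=\emptyset$, so such an $\varepsilon^{*}$ makes $\Sigma^{s}_{\varepsilon^{*}}$ safe with respect to $(X_{o},X_{u})$ with a constant margin, which is exactly uniform strong robust safety. Note that \ref{item:cond1uuu} is the special case of \ref{item:cond2uuu} with $\partial_{C}B=\nabla B$, so it needs no separate treatment.

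Write $H_{\varepsilon}(x):=\co\{F(x+\varepsilon\mathbb{B})\}+\varepsilon\mathbb{B}$ for the right-hand side of $\Sigma^{s}_{\varepsilon}$, and let $c>0$ be the infimum in the assumed condition. Since $F$ is convex-valued, Assumption \ref{assusrs} gives $\co\{F(x+\varepsilon\mathbb{B})\}\subset F(x)+\lambda_{1}(\varepsilon)\lambda_{2}(x)\mathbb{B}$, hence $H_{\varepsilon}(x)\subset F(x)+\bigl(\lambda_{1}(\varepsilon)\lambda_{2}(x)+\varepsilon\bigr)\mathbb{B}$, so every $\zeta\in H_{\varepsilon}(x)$ is of the form $\zeta=\bar\eta+u$ with $\bar\eta\in F(x)$ and $|u|\le\lambda_{1}(\varepsilon)\lambda_{2}(x)+\varepsilon$. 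Consequently, for any $\xi$ in the gradient set appearing in the assumed condition ($\nabla B(x)$, $\partial_{C}B(x)$, $\partial_{P}B(x)$ or $\partial_{P}(-B(x))$), writing that condition as $\langle\xi,-\bar\eta\rangle\ge c\,|\xi|\,(1+\lambda_{2}(x))$ — with $-\bar\eta$ replaced by $\bar\eta$ in case \ref{item:cond4uuu} — gives
$$ \langle\xi,\zeta\rangle=\langle\xi,\bar\eta\rangle+\langle\xi,u\rangle\ \le\ \langle\xi,\bar\eta\rangle+|\xi|\bigl(\lambda_{1}(\varepsilon)\lambda_{2}(x)+\varepsilon\bigr)\ \le\ |\xi|\Bigl((\lambda_{1}(\varepsilon)-c)\,\lambda_{2}(x)+(\varepsilon-c)\Bigr), $$
with the inequalities reversed in case \ref{item:cond4uuu}. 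As $\lambda_{2}\ge0$ and $\lambda_{1}$ is continuous with $\lambda_{1}(0)=0$, I fix $\varepsilon^{*}\in(0,c)$ with $\lambda_{1}(\varepsilon^{*})<c$; then the right-hand side is strictly negative, so $\langle\xi,\zeta\rangle<0$ for every $\zeta\in H_{\varepsilon^{*}}(x)$ and every nonzero $\xi$ in the relevant gradient at $x$ (and $>0$ in case \ref{item:cond4uuu}) — reading ``infimum $>0$'' as forbidding the degenerate points where that gradient is $\{0\}$ or empty.

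It then remains to feed this uniform infinitesimal inequality into the appropriate barrier criterion, exactly as at the end of the proof of Theorem \ref{thm1}. First, $H_{\varepsilon^{*}}$ inherits Assumption \ref{ass1} from $F$ — and is locally Lipschitz whenever $F$ is — because $x\mapsto x+\varepsilon^{*}\mathbb{B}$ is a continuous set-valued map, the image of a compact set under $F$ is compact, and forming the convex hull and then the Minkowski sum with $\varepsilon^{*}\mathbb{B}$ preserves nonemptiness, compactness, convexity and outer semicontinuity. Hence, in case \ref{item:cond1uuu} the inequality is precisely \eqref{eqrsafe} for $\dot x\in H_{\varepsilon^{*}}(x)$, which certifies safety via \cite[Theorem 5]{draftautomatica}; in case \ref{item:cond2uuu} it is the Clarke-gradient version invoked under \ref{item:cond3} of Theorem \ref{thm1}; and in cases \ref{item:cond3uuu} and \ref{item:cond4uuu} it is the proximal-subdifferential forward-invariance criterion for, respectively, lower and upper semicontinuous $B$ under locally Lipschitz dynamics (the extra requirement $\cl(K)\cap X_{u}=\emptyset$ in \ref{item:cond4uuu} providing the separation of $K$ from $X_{u}$). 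In each case $\Sigma^{s}_{\varepsilon^{*}}$ is safe with respect to $(X_{o},X_{u})$ with the constant margin $\varepsilon^{*}$, i.e., $\Sigma$ is uniformly strongly robustly safe.

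The step I expect to need the most care is reconciling the domains in \ref{item:cond3uuu}--\ref{item:cond4uuu}: there the hypothesis lives on a neighborhood $U(\partial K)$ (as the forward-invariance criteria for semicontinuous barriers require), while Assumption \ref{assusrs} supplies the modulus inclusion $F(x+\delta\mathbb{B})\subset F(x)+\lambda_{1}(\delta)\lambda_{2}(x)\mathbb{B}$ only on $\partial K$. I would bridge this by exploiting the local Lipschitzness of $F$ (assumed in those two cases) together with continuity of $\lambda_{2}$ to propagate that inclusion to a possibly smaller neighborhood of $\partial K$, at the cost of replacing $\lambda_{2}$ there by $C(\lambda_{2}+1)$ for some constant $C$ and shrinking $\varepsilon^{*}$ further so that $\varepsilon^{*}+\lambda_{1}(\varepsilon^{*})C<c$; the estimate of the second paragraph then runs through unchanged. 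Checking the regularity of $H_{\varepsilon^{*}}$ and bookkeeping these constants is routine — the whole point of the conditions being normalized by $1/\bigl(|\xi|\,(1+\lambda_{2}(x))\bigr)$ is precisely that this bookkeeping collapses to the single one-line bound above.
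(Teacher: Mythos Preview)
Your approach mirrors the paper's: the same inner-product estimate (the paper writes it as $(-\sigma+\max(\varepsilon,\lambda_1(\varepsilon)))|\zeta|(1+\lambda_2(x))$, equivalent to your $(\lambda_1(\varepsilon)-c)\lambda_2(x)+(\varepsilon-c)$), then the same barrier criteria case by case, with \ref{item:cond1uuu} subsumed by \ref{item:cond2uuu}. Two minor notes: you are actually more careful than the paper in flagging that \eqref{eqass} is stated only on $\partial K$ while \ref{item:cond3uuu}--\ref{item:cond4uuu} need it on $U(\partial K)$ --- the paper simply writes ``we follow the same computations'' there without addressing this, and your proposed patch via the local Lipschitzness of $F$ is the natural fix; conversely, for \ref{item:cond4uuu} the paper spells out the time-reversal step (apply the lower-semicontinuous criterion to $-B$ along the backward system $\Sigma^{s-}_\varepsilon:\dot x\in -\co\{F(x+\varepsilon\mathbb{B})\}+\varepsilon\mathbb{B}$, then observe that forward solutions of $\Sigma^{s}_\varepsilon$ are time-reversals), which you leave implicit behind ``the proximal-subdifferential criterion for upper semicontinuous $B$''.
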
 

\begin{proof}
\blue{The proof under \ref{item:cond1uuu} would follow  straightforwardly  if we prove strong robust safety under \ref{item:cond2uuu}.  Indeed,  when $B$ is continuously differentiable,   $\partial_C B = \nabla B$. }

\begin{itemize}

\item Under \ref{item:cond2uuu},  we let $\varepsilon>0$ and
\begin{align*}
\hspace{-0.2cm} \sigma := \inf \left\{ \frac{\langle \zeta, - F(x) \rangle}{\lvert \nabla B(x) \rvert (1 +  \lambda_2(x)) } : x \in  \partial K, ~ \zeta \in \partial_C B (x) \right\}.
\end{align*} 
Then,  for all $(x,\mu )\in (\partial K ,\mathbb{B})$, we use \eqref{eqass} to conclude that, for each $\eta_1 \in F(x+\varepsilon \mu)$, we can find $\eta_2 \in F(x)$ such that $ |\eta_1 - \eta_2| \leq   \lambda_1(\varepsilon) \lambda_2(x)$.    As a result,   for each $\zeta \in \partial_C B (x)$,  we have 
\begin{align*}
& \langle \zeta,   \eta_1 + \varepsilon \mu \rangle  \\ & =  
\langle \zeta,  \eta_2 \rangle + \langle \zeta,  , \varepsilon \mu \rangle   +
\langle \zeta,  \eta_1 - \eta_2 \rangle 
\\ & \leq  - \sigma |\zeta| (1 + \lambda_2(x) )  +  
 |\zeta| ( \varepsilon +   \lambda_1(\varepsilon)\lambda_2(x) )
\\ &  \leq  (-\sigma + \max(\varepsilon, \lambda_1(\varepsilon)))  |\zeta| 
(1 + \lambda_2(x)).
\end{align*} 
Hence,  by choosing $\varepsilon >0$ sufficiently small,  we guarantee that 
$ - \sigma +\max (\varepsilon, \lambda_1(\varepsilon)) < 0.  $
Therefore,  for each $\zeta \in \partial_C B (x)$,  we have 
\begin{align*}
    \langle \zeta,  F(x+\varepsilon \mathbb{B}) + \varepsilon \mathbb{B} \rangle \subset   \mathbb{R}_{<0} \qquad \forall x \in \partial K.
\end{align*}
\blue{and,  thus,  for each $\zeta \in \partial_C B (x)$,  we have 
\begin{align*}
    \langle \zeta,  \co \{ F(x+\varepsilon \mathbb{B}) \} + \varepsilon \mathbb{B} \rangle \subset   \mathbb{R}_{<0} \qquad \forall x \in \partial K.
\end{align*}}
This implies forward invariance of the set $K$ for 
$\Sigma^s_\varepsilon$ using \cite[Theorem 6]{draftautomatica},  which is enough to conclude uniform strong robust safety. 

\item  Under \ref{item:cond3uuu},  when $B$ is lower semicontinuous,  we show the existence of a constant 
$\varepsilon>0$ such that 
\begin{align}  \label{eqdecrease1}
\hspace{-0.8cm}  \langle \partial_P B(x),  \co \{ F(x+\varepsilon \mathbb{B})  \} + \varepsilon \mathbb{B} \rangle \subset \mathbb{R}_{<0}   ~  \forall x \in  U(\partial K).
\end{align}
Now,  \textcolor{blue}{using Lemma \ref{lemA8}},  we conclude that \eqref{eqdecrease1} implies that,  along each solution $\phi$ to $\Sigma^s_\varepsilon$ starting from $U(\partial K)$ and remaining in $U(\partial K)$,  the map $t \mapsto B(\phi(t))$ is nonincreasing and that  
$\Sigma^s_\varepsilon$ is safe with respect $(X_o,X_u)$,  which completes the proof. 

\item  Under \ref{item:cond4uuu},   we follow the same computations as when $B$ is smooth,  to conclude that,  for some constant 
$\varepsilon > 0$ sufficiently small,  we have 
\begin{align}\label{eqdec}
\langle \zeta ,  -\co \{ F(x+\varepsilon \mathbb{B}) \} + \varepsilon \mathbb{B}  \rangle \subset \mathbb{R}_{<0}
\end{align}
 for all $(\zeta,x) \in \partial_P(-B(x)) \times U( \partial K)$.
Since $- B$ is lower semicontinuous,  
we use \textcolor{blue}{Lemma \ref{lemA8}} to conclude that,  along each solution $\psi$ to 
\begin{align*} 
\Sigma^{s-}_{\varepsilon} : \quad 
\dot{x} \in - \co \{ F(x + \varepsilon \mathbb{B}) \} + \varepsilon \mathbb{B} \qquad x \in \mathbb{R}^n.
\end{align*} 
starting from $ U(\partial K)$ and remaining in $U(\partial K)$, the map $t \mapsto  -B(\psi(t))$ is nonincreasing. Thus, the map 
$t \mapsto  B(\psi(t))$ is nondecreasing.  
Now,  we pick a solution $\phi$  to  $\Sigma^s_\varepsilon$  with 
$\dom \phi = [0, T]$ and $\phi(\dom \phi) \subset  U(\partial K)$. Note that the map $t \mapsto \psi(t) := \phi(T-t)$ is solution to $\Sigma^{s-}_\varepsilon$ satisfying $\psi(\dom \psi) \subset  U(\partial K)$.  Thus, 
$t \mapsto  B(\phi(T-t))$ is nondecreasing,  which implies that 
 $t \mapsto B(\phi(t))$ is non-increasing.  The latter implies,  using \textcolor{blue}{Lemma \ref{lemA8}},  that $\Sigma^s_\varepsilon$ is safe with respect $(X_o,X_u)$,   which completes the proof. 
 \end{itemize}
\end{proof}

\begin{remark}
To guarantee uniform robust safety,  in \cite{RubSafPI},   the inequalities in \ref{item:cond1uuu},  \ref{item:cond2uuu},   
\ref{item:cond3uuu},  and \ref{item:cond4uuu} are used  while replacing  the $\lambda_2$ therein by $0$.  Thus,  Assumption \ref{assusrs} is not needed to characterize uniform robust safety.   However,  as we show in the next example,  for the case when $B$ is continuously differentiable,   the sufficient inequality used in  \cite{RubSafPI} to guarantee uniform robust safety,  which is
\begin{align} \label{eqexp2}
\hspace{-0.8cm} \inf \left\{ \frac{\langle \nabla B (x), -F(x) \rangle}{\lvert \nabla B(x) \rvert} :  x \in  \partial K  \right\} > 0,
\end{align}
 is not strong enough to guarantee uniform strong robust safety. 
\end{remark}

\begin{example} \label{exp2}
Consider the differential inclusion $\Sigma$  with  
$$ F(x) = \begin{bmatrix} F_1(x)  \\  -1+x^2_1x_2 \end{bmatrix}.   $$ 
Consider the initial and unsafe sets 
$X_o = \{x \in \mathbb{R}^2 : x_2 \leq 0\}$  and  $X_u = \mathbb{R}^2 \backslash X_o.  $
Next,  we choose $B(x)= x_2$ as a barrier function candidate with respect to $(X_o,X_u)$.   Note that  in this case $K = X_o$ and  
$ \partial K = \{x \in \mathbb{R}^2 : x_2 = 0\} $.
We start showing that the system is strongly robustly safe with respect to $(X_o, X_u)$.  To do so, we note that
\begin{align*}
\langle \nabla B, F(x) \rangle  =  -1 < 0 \qquad \forall x \in \partial K.
\end{align*}
Now,  we show that system $\Sigma$ is uniformly robustly safe by verifying \eqref{eqexp2}.  Indeed,  we note that 
\begin{align*}
     \frac{\langle \nabla B, - F(x) \rangle}{|\nabla B(x)|} &= 1 - x^2_1x_2 = 1 > 0 \qquad \forall x \in \partial K.
\end{align*}
Hence,  \eqref{eqexp2} holds,  which proves uniform  robust safety. 

Now,  we show that the considered system cannot be uniformly strongly robustly safe by  showing  that,  for each constant $\varepsilon > 0$,   $\Sigma^s_\varepsilon$ admits a solution starting from  $x_o := (1/\sqrt{\varepsilon},  0) \in \partial K$ that enters $X_u$.  To do so,  we start noting that,  for $\mu := [0 ~~ 1]^\top$,  we have     
\begin{align*}
    \langle \nabla B(x_o),  & ~ F(x_o + \varepsilon \mu)  + \varepsilon \mu \rangle  = -1 + \varepsilon + x_{o1}^2 \varepsilon
     \\ & = -1 + \varepsilon  + \left( \frac{1}{\sqrt{\varepsilon}} \right)^2 \varepsilon
    = \varepsilon > 0.
\end{align*}
As a consequence,  using  \textcolor{blue}{\cite[Lemma 4]{draftautomatica}} with $B(x) = -x_2$,  we conclude that 
$ F(x_o + \varepsilon \mu) + \varepsilon \mu \subset D_{\text{int} (X_u)} (x_o), $ 
where $D_K$ is the \textit{Dubovitsky-Miliutin} cone of $K$ defined as 
\begin{align} \label{eq.cone2}
D_K(x) := & \{v \in \mathbb{R}^n : \exists \varepsilon >0: \nonumber \\ &
x + \delta (v + w) \in K~\forall \delta \in (0,\varepsilon],~\forall w \in \varepsilon \mathbb{B}  \}.
\end{align}

Finally,  using \cite[Theorem 4.3.4]{Aubin:1991:VT:120830},   we conclude that the  solution from $x_o$ to the system $ \dot{x} = F(x + \varepsilon \mu)  + \varepsilon \mu $ enters $X_u$.   
\end{example}

\section{Conclusion}

This paper introduced a strong robust-safety notion for continuous-time systems inspired by the scenario of control loops subject to perturbations in both sensing and actuation.  We demonstrated that this notion is stronger than the existing safety and robust-safety notions available in the literature.  Furthermore,  we showed that the existing sufficient conditions for robust safety in \cite{RubSafPI} are strong enough to guarantee strong robust safety under mild regularity assumptions on $F$.  
Following that,  we showed that strong robust safety is equivalent to the existence of a smooth barrier certificate.   Unlike the converse robust safety theorem in \cite{RubSafPII},  we do not require $F$ to be continuous. 
 Furthermore,  we introduced the uniform strong robust-safety notion,  which requires constant strong robust-safety margins and we showed  that the existing sufficient conditions for uniform robust safety in \cite{RubSafPI} are not strong enough to guarantee uniform strong robust safety.  Hence, new sufficient conditions are derived.   
  In future work, it will be interesting to analyze robust safety and strong robust safety for constrained continuous-time systems as well as hybrid systems.

\section*{Acknowledgment}
The authors are grateful to Christophe Prieur for supporting the second author's internship and for the useful feedback on the manuscript.

\section*{Appendix I :   Proof of Propositions}

\subsection{Proof of Proposition  \ref{lemA444}}

The proof follows five steps:

\begin{itemize}
\item \textbf{1st step.} Consider a compact subset $I \subset \mathbb{R}^n$ and a continuous function $\varepsilon : \mathbb{R}^n \rightarrow \mathbb{R}_{>0}$.  Let 
$$ \varepsilon_I := \min\{\varepsilon(x) : x\in I \} > 0.  $$
We will show the existence of $\delta_I > 0$ such that 
\begin{align} \label{eqgraphrev}
  \graph(F) + \delta_I \mathbb{B}  \subset 
  \graph(F_{\varepsilon_{I}}),  
\end{align}
where $ F_{\varepsilon_{I}}(x) := \co \{ F(x + \varepsilon_I\mathbb{B}) \} + \varepsilon_I\mathbb{B}  $,  $\graph(F) \subset \mathbb{R}^{n \times n}$ denotes the graph of $F$,  and \blue{the dimension of $\mathbb{B}$ in \eqref{eqgraphrev} is $n \times n$.}    In other words,   we show the existence $\delta_I >0$ such that,  for each $(x,y) \in (I\times \mathbb{R}^n) \backslash  \graph(F_{\varepsilon_{I}})$, we have   $|(x,y)|_{\graph(F)} > \delta_I$.

To find a contradiction,   we suppose that there exits a sequence $\{(x_i,y_i)\}^{\infty}_{i=0} \subset(I\times \mathbb{R}^n) \backslash  \graph(F_{\varepsilon_{I}})$ 
such that  $\lim_{i \rightarrow \infty} |(x_i,y_i)|_{\graph(F)} = 0$.   Now,  since $I$ is compact,  $F$ is outer semicontinuous and locally bounded, we conclude that $\graph(F)$ restricted to $I$ is compact.  which implies that the sequence $\{(x_i,y_i)\}^{\infty}_{i=0}$ is uniformly bounded.  Hence,  by passing to an appropriate  sub-sequence, we conclude the existence of 
$(x^*, y^*) \in \cl \left( (I\times \mathbb{R}^n) \backslash  \graph(F_{\varepsilon_{I}}) \right)$ with $x^* \in I$ such that $ \lim_{i \rightarrow \infty} (x_i,y_i) = (x^*,y^*)$, which implies,  using the continuity of the distance function,  that 
$$ \lim_{i \rightarrow \infty} |(x_i,y_i)|_{\graph(F)} = |(x^*,y^*)|_{\graph(F)} = 0.  $$
As a result,  we have 
$$ (x^*,y^*) \in \graph(F) \subset \text{int} \left( \graph(F_{\varepsilon_I}) \right),   $$  
which yields to a contradiction.

\item \textbf{2nd step:}  We will show the existence of a sequence  of continuous set-valued maps with convex and compact images,  defined on $I$,  that converges graphically to $F$.

To do so,  we start using  \cite[Theorem 2]{S_M_Aseev} 
to conclude the existence of a sequence $\{ F_k(x) \}^{\infty}_{i=1}$ of continuous set-valued maps with convex and compact images,  defined on $I$,   such that,  for each $x \in I$,  the following properties hold:
\begin{enumerate} [label= (p\arabic*),leftmargin=*]
\item \label{item:p1} $F_{k+1}(x) \subset F_k(x) \qquad \forall k \in \{ 1,2,...\}$.
\item \label{item:p2} $F(x) \subset \text{int} (F_k(x))  \qquad \forall k \in \{ 1,2,... \}$.  
\item \label{item:p3} $F(x) = \lim_{k \to \infty} F_{k}(x) = \bigcap^{\infty}_{k=1} F_{k}(x)$.
\end{enumerate}

Next,   we show that the aforementioned sequence converges graphically to $F$.  To find a contradiction,  we assume that the opposite holds; namely,  there exists 
$\sigma >0$ and a sequence 
$\{(x_k,y_k)\}^{\infty}_{k=1}$ such that $(x_k,y_k)  \in \graph(F_k)$ and
$$ |(x_k,y_k)|_{\graph(F)}>\sigma \qquad \forall k \in \{1,2,...\}.  $$

Since $I$ is compact,  $\{x_k\}^{\infty}_{k=1} \subset I$, 
 and,  under \ref{item:p1},  
$$ y_k \in F_k(x_k) \subset F_{1}(x_k) \subset F_1(I) \quad \forall k \in \{1,2,...\}, $$  
 we conclude that the sequence $\{(x_k, y_k)\}^{\infty}_{k=1}$ is uniformly bounded.   Thus,   by passing to an appropriate subsequence,  we conclude the existence of $(x^*,y^*) \in I \times \mathbb{R}^n$ such that $\lim_{k \rightarrow \infty}  (x_k, y_k)  =  (x^*,y^*)$.  The latter plus the continuity of the distance function allows us to conclude that   
$|(x^*,y^*)|_{\graph(F)}>\sigma,$
which means that  $y^* \notin F(x^*) + \sigma \mathbb{B}$. 

Now,  using \ref{item:p3},  we can choose $k^* \in \{1,2,...\}$ sufficiently large to have 
\begin{align} \label{eqylim+}
F_{k^*} (x^*)  \subset F(x^*) +  (\sigma/4) \mathbb{B}.   
\end{align}
Next,  using \ref{item:p1},  we conclude that 
\begin{align} \label{eqylim-}
y_k  \in  F_{k} (x_k)     \subset  F_{k^*} (x_k) \qquad \forall k \geq k^*.     
\end{align}

Furthermore,  since $F_{k^*}$ is continuous, we conclude the existence of $k^{**} \geq k^*$ such that 
\begin{align} \label{eqylim--}
F_{k^*} (x_k)  \subset  F_{k^*} (x^*)  + (\sigma/4) \mathbb{B} \qquad \forall k \geq k^{**}.  
\end{align}
Moreover,   since $ \lim_{k \rightarrow \infty} y_k = y^*$, we conclude that we can take $k^{**}$ even larger to obtain 
\begin{align} \label{eqylim}
y^* \in y_k +  (\sigma/4) \mathbb{B}   \qquad \forall k \geq k^{**}.    
\end{align} 
Finally,  combining \eqref{eqylim+},  \eqref{eqylim-}, \eqref{eqylim--},  and \eqref{eqylim},   we obtain 
$$ y^* \in    F (x^*)  + (3\sigma/4) \mathbb{B} $$
and the contradiction follows. 

\item \textbf{3rd step:} Using the previous two steps,   we show that there exists a continuous set-valued map $G_I : I \rightrightarrows \mathbb{R}^n$ with convex and compact images such that 
\begin{align*}
    F(x) \subset G_I(x) \subset F_{\varepsilon_I}(x) \qquad \forall x \in I.      
\end{align*}

Indeed,  we showed the existence of a sequence of continuous set-valued maps $\{F_k\}^{\infty}_{k=1}$ that converges graphically to $F$.  
In  particular,  there exists $k_o \in \{1,2,...\} $ such that 
$$  \graph (F_{k_o})  \subset \graph(F) + (\delta_I/2)  \mathbb{B}  \subset \graph (F_{\varepsilon_I}).  $$
Moreover, under \ref{item:p2},  we have  $\graph (F) \subset \graph (F_{k_o})$.    
Hence, the proof is completed by taking $G_I = F_{k_o}$.

\item \textbf{4th step:} 
At this point,  we propose to grid $\mathbb{R}^n$ using a sequence of non-empty compact subsets $\{I_i\}_{i=1}^{N}$,  where $N \in \{1,2,3, \dots ,\infty \}$.  That is,  we assume that $I_i \subset \mathbb{R}^n$ for all $i \in \{1,2, \dots ,N\}$ and $\bigcup^{\infty}_{i = 1} I_i = \mathbb{R}^n$. Furthermore, for each $i \in \{1,2,...,N\}$, there exists a finite set  $\mathcal{N}_i \subset \{1,2,...,N \}$  such that 
\begin{align} \label{eqchhh2}
I_i \cap I_j  & = \emptyset \qquad  \forall j \notin \mathcal{N}_i  
\\
\mbox{int}(I_i \cap I_j)  & = \emptyset \qquad \forall j \in \mathcal{N}_i\backslash \{i\} ,
\end{align}
where $\mbox{int}(\cdot)$ denotes the  interior of $(\cdot)$.

Using the previous steps, we conclude that,   on every subset $I_i$,   there exists a continuous set-valued map $G_{I_{i}} : I_i \rightrightarrows \mathbb{R}^n$ with convex and compact images such that 
\begin{align*}
    F(x) \subset G_{I_{i}}(x) \subset F_{\varepsilon_{I_i}}(x)  \qquad \forall x \in I_i.      
\end{align*}
Now,  we let $G_o : \mathbb{R}^n \rightrightarrows \mathbb{R}^n$ defined as
 \[ G_o(x) :=  \begin{cases} 
G_{I_i}(x)  & \hspace{-0.5cm}  \text{if}~x \in \text{int}(I_i), 
 \\
 \{ y \in G_{I_{j}} (x) : x \in I_j, ~ j \in \mathcal{N}_i\} & \text{otherwise}.
 \end{cases}  \]

 The function $G_o$ is,  by definition,  lower semi-continuous and satisfies 
\begin{align*}
 F(x) \subset  G_o(x) \subset \co\{ F(x + \varepsilon(x) \mathbb{B} ) \} + \varepsilon \mathbb{B} \qquad \forall x \in \mathbb{R}^n. 
\end{align*}
Now, using \cite[Theorem 4]{S_M_Aseev}, we conclude the existence of a continuous set-valued map $G : \mathbb{R}^n \rightrightarrows \mathbb{R}^n$ with convex and compact images such that
\begin{align*}
    F(x) \subset G(x) \subset \co \{ F(x + \varepsilon(x) \mathbb{B}) \} + \varepsilon(x) \mathbb{B} \quad \forall x \in \mathbb{R}^n.
\end{align*} 
\end{itemize}
$\blacksquare$

\subsection{Proof of Proposition  \ref{lem2p}}

We start introducing the continuous function $\beta : \mathbb{R}_{\geq 0}  \times  \mathbb{R}_{\geq 0}   \to \mathbb{R}$ given by
\begin{align*}
\beta(r,\delta) := \max \{ g(y,s) : |y| \leq  r,  ~ 0 \leq s \leq \delta\},  
\end{align*} 
where $g: \mathbb{R}^n \times \mathbb{R}_{\geq 0} \to \mathbb{R}$ is  given by 
\begin{align*}
 g(y,s) := |F(y+s \mathbb{B}) -  F(y)|_H - |F(s \mathbb{B}) - F(0)|_H. 
\end{align*}
It is clear  to see that 
$$ g(x,\delta) \leq \beta(|x|,\delta) \qquad  \forall  (x,\delta)\in \mathbb{R}^n \times \mathbb{R}_{\geq 0}.$$

Furthermore,  we note that the function $\beta$ enjoys the following properties:

\begin{itemize}
\item  $\beta(\cdot,\delta)$ is continuous and nondecreasing for each $\delta \in \mathbb{R}_{\geq 0}$.  
\item  $\beta(r, \cdot)$ is continuous and nondecreasing for each $r \in \mathbb{R}_{\geq 0} $.   
\item  $\beta(0,\delta) = \beta(r, 0) =0$ for all $(r,\delta) \in \mathbb{R}_{\geq 0} \times \mathbb{R}_{\geq 0}$.   
\end{itemize} 

Next,  we will construct a continuous and nondecreasing function 
$\alpha: \mathbb{R}_{\geq 0} \rightarrow \mathbb{R}_{\geq 0}$ such that $\alpha(0) = 0$ and 
$$ \beta(r,\delta) \leq \alpha(r) 
\alpha(\delta) \qquad  \forall 
(r,\delta) \in \mathbb{R}_{\geq 0} \times \mathbb{R}_{\geq 0}.  $$
Indeed,  the latter would imply that
\begin{align*}
g(x,\delta) \leq \alpha(|x|) \alpha(\delta) \qquad \forall (x,\delta) \in \mathbb{R}^n \times \mathbb{R}_{\geq 0}.  
\end{align*}
Hence, for each  $(x,\delta) \in \mathbb{R}^n \times \mathbb{R}_{\geq 0}$,  we have 
\begin{align*}
     |F(x+\delta \mathbb{B}) & - F(x)|_H 
     \leq   \alpha(|x|) \alpha(\delta) + |F(\delta\mathbb{B})- F(0)|_H
    \\ & \leq [\alpha(|x|)+1]  \max\{ \alpha(\delta), |F(\delta\mathbb{B}) - F(0)|_H \}.
\end{align*}
Finally,  we take 
$\lambda_1(\delta) := \max\{\alpha(\delta), |F(\delta\mathbb{B}) - F(0)|_H \}$ and 
$\lambda_2 (x) := \alpha(|x|)+1$. 

The proposed construction of $\alpha$ follows the same steps as in  the proof of \cite[Propositions IV.4 and IV.5]{iisstac}.  The only difference is that,  in the aforementioned reference,  $\beta$ is assumed to be strictly increasing in each of its arguments.   In the following,  we recall such a proof since $\beta$ is only nondecreasing in our case,  and to have an explicit form of the function $\alpha$.   To this end,   we introduce a set of intermediate functions.

\begin{enumerate}
 \item  We define the function $c:\mathbb{R} \times \mathbb{R} \to \mathbb{R}$ given by 
$$c(a,b) := \log \left( \beta \left(e^{a},e^b \right) \right).   $$ 
Note that the function $c$ enjoys the following properties:
\begin{itemize}
\item $c$ is continuous, 
\item $c$ nondecreasing in each of its arguments, 
\item $\lim_{a \rightarrow -\infty} c(a,b) = -\infty$ for all $b \in \mathbb{R}$, 
\item $\lim_{b \rightarrow - \infty} c(a,b) = -\infty$ for all $a \in \mathbb{R}$.
\end{itemize}
 
We will assume,  without loss of generality,   that $c(0,0) > 0$ and we let
$$\bar{a} := \sup \{a \in \mathbb{R} : c(a,0)=0\}. $$
Note that such $\bar{a}$ always exists  since $c(\cdot,0)$ is continuous,  $c(0,0) > 0$, and $c(-\infty,0) = - \infty$.

\item Now,    we introduce a  nonincreasing and continuous function  
$g_o : (-\infty , \bar{a}] \to [0,+\infty)$
such that 
\begin{align*} 
\graph (g_o)  := \{(a,b) \in \mathbb{R} \times \mathbb{R}  :  c(a,b) = -b \}.
\end{align*}
The existence of such a function is shown  in the proof of  
 \cite[Lemma A.1.]{iisstac}.

\item  We introduce the function $g : \mathbb{R} \rightarrow \mathbb{R} $ given by 
\begin{equation*}
\begin{aligned}
 g(a) :=   
 \begin{cases}
-g_o(a)/2  & \text{if}  \;\ a \leq \bar{a} \\
c(a,a) - c(\bar{a},\bar{a}) + a -\bar{a} & \text{if}  \;\ a > \bar{a}. 
\end{cases}
\end{aligned}
\end{equation*}
By construction,  $g$ is continuous,   
nondecreasing,   and we have that  $\lim_{a \rightarrow -\infty} g(a) = - \infty$. 

At this point,  we show that 
$$ \lim_{a \rightarrow \pm \infty}  (c(a,b) - g(a)) = - \infty.  $$
Indeed,  we pick $b \in \mathbb{R}$ and we let  $a \geq \max\{\bar{a},b \}$,  we note that
$$ c(a,b)-g(a) = c(a,b) - c(a,a) - a + \bar{c} \leq \bar{c} -a,$$
where $\bar{c} := \bar{a} + c(\bar{a},\bar{a})$.  
As a result, it follows that 
$$  \lim_{a \rightarrow + \infty}  (c(a,b) - g(a)) = -\infty.   $$

On the other hand,  for any $a \leq \bar{a}$ such that $g_o(a) > b$, we have 
\begin{equation*}
    \begin{aligned}
        c(a,b) & - g(a)  = c(a,b) + \frac{1}{2} g_o(a)
        \\ & \leq c(a,g_o(a)) + g_o(a) -  \frac{1}{2} g_o(a)  
      = - \frac{1}{2} g_o(a),
    \end{aligned}
\end{equation*}
and $\lim_{a \rightarrow -\infty} g_o(a) = +\infty$. 
Hence,  for each $b \in \mathbb{R}$,  we have 
$$ \lim_{a \rightarrow - \infty}  \left(  c(a,b) - g(a) \right) = -\infty.
$$

\item  We introduce the function $h: \mathbb{R} \to \mathbb{R}$ given by
\begin{equation*}
 \begin{aligned}
 h(b) := \sup_{a \in \mathbb{R}} [c(a,b) - g(a)]. 
    \end{aligned}
\end{equation*}
As $a \mapsto c(a,b)-g(a)$ is continuous, and is negative for large $a$,  it follows that $h$ is well defined.   

Since $h(\cdot)$ is the supremum  of a family 
$$ \{c(a,\cdot) - g(a)\}_{a \in \mathbb{R}^n} $$ 
of continuous functions, $h$ is itself continuous, and since,  each member of this family is nondecreasing,  it follows that $h$ is also nondecreasing.  

We prove,  now, that 
$ \lim_{b \rightarrow - \infty} h(b) = -\infty$.  
To do so,   we show that,  for each $v < 0$, we can find $l \leq 0$ such that,  whenever  $b < l$,  then $h(b)< v$. 

 Indeed,   given $v < 0$,   we pick $\rho>0$ so that 
 $$ c(a,0) - g(a) < v \qquad  \forall |a| > \rho.  $$ 
 Next,   we pick an $l  \leq 0$ so that 
 $$ c(\rho, l) - g(-\rho) < v.   $$  
 Such an $l$ exists since $c(\rho, \cdot)$ is unbounded below.    
 
 Consider first the case $|a| \leq \rho$; then 
$$ c(a,b) - g(a) \leq c(\rho,l) - g(-\rho) < v.  
$$
If instead $|a| > \rho$,  then also
$$c(a,b) - g(a) \leq c(a,0) - g(a) < v,$$
using the fact that $b < l \leq 0$.
\end{enumerate}

So,  we have constructed $g$ and $h$ such that 
\begin{equation*}
\begin{aligned}
 c(a, b)  \leq g(a) + h(b).
 \end{aligned}
\end{equation*}

 Hence,   we obtain
$$c(a,b) \leq \max\{g,h\}(a) + \max\{g,h\}(b) \qquad \forall (a,b) \in \mathbb{R} \times \mathbb{R}.$$
Note that, the function $\max\{g,h\}$ is also continuous,   nondecreasing,  and unbounded below.   Hence,  the function $\alpha: \mathbb{R}_{\geq 0} \rightarrow \mathbb{R}_{\geq 0} $ given by  
$$ s \mapsto \alpha(s) :=  \exp(\max\{g,h\}(\ln(s))),  $$  
 is continuous,  non-decreasing,   and satisfies $\alpha(0) = 0$.  
\hfill $\blacksquare$

\section*{Appendix II :   Intermediate Results}

\begin{lemma}  \label{lemA8} 
\blue{Given initial and unsafe sets
$(X_o, X_u) \subset \mathbb{R}^n \times \mathbb{R}^n$,   $\Sigma$ is safe with respect to $(X_o, X_u)$ if there exists  a barrier function candidate $B : \mathbb{R}^n \rightarrow \mathbb{R}$ such that 
\begin{enumerate} 
[label={($\star$)},leftmargin=*]
\item \label{item:star1} 
Along each solution $\phi$ to $\Sigma$ with $\phi(\dom \phi) \subset U(\partial K)$,   the map $t \mapsto B(\phi(t))$ is non-increasing. 
\end{enumerate}
If $B$ is continuous,  we can replace $U(\partial K)$ in \ref{item:star1} by $U(\partial K) \backslash K$.  In turn, 

\begin{enumerate}
\item When $B$ is lower semicontinuous and $F$ is locally Lipschitz,  \ref{item:star1} is satisfied if and only if 
\\
$ \langle \partial_P B(x) ,  \eta  \rangle \subset \mathbb{R}_{\leq 0} \qquad \forall \eta \in F(x), \quad \forall x \in U(\partial K).  $

\item When $B$ is locally Lipschitz,  \ref{item:star1} is satisfied,  while replacing $U(\partial K)$ therein by $U(\partial K) \backslash K$,  if
\\
$
\langle \partial_C B(x), \eta \rangle \subset  \mathbb{R}_{\leq 0}  \quad \forall \eta \in F(x), 
\quad   \forall x \in  U(\partial K) \backslash K.
$
\end{enumerate}}
\end{lemma}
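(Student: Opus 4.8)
The plan is to reduce safety to forward invariance of the zero-sublevel set $K$, to obtain that invariance from $(\star)$ by a boundary-crossing argument, and finally to read off the infinitesimal characterizations in items (1)--(2) from classical nonsmooth monotonicity results. Since $B$ is a barrier candidate, $X_o\subseteq K$ and $K\cap X_u=\emptyset$, so it suffices to show that $(\star)$ forces $K$ — which is closed, $B$ being at least lower semicontinuous — to be forward invariant for $\Sigma$; safety then follows because solutions from $X_o$ stay in $K$ and $K$ misses $X_u$. I would argue by contradiction: if a solution $\phi$ has $\phi(0)\in K$ and $\phi(t_2)\notin K$, set $t_1:=\sup\{t\in[0,t_2]:\phi(t)\in K\}$; closedness of $K$ and continuity of $\phi$ give $\phi(t_1)\in\partial K$, $t_1<t_2$, and $B(\phi(t))>0$ for all $t\in(t_1,t_2]$, while $\partial K\subset U(\partial K)$ with $U(\partial K)$ open forces $\phi$ to stay in $U(\partial K)$ on some $[t_1,s]$ with $s\in(t_1,t_2]$, along which $B\circ\phi$ passes from $B(\phi(t_1))\le 0$ to $B(\phi(s))>0$, contradicting $(\star)$. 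When $B$ is continuous I would sharpen the interval: with $s':=\sup\{t\in[t_1,s]:\phi(t)\in K\}$ one gets $B(\phi(s'))=0$ (a limit of the strictly positive values $B(\phi(t))$ as $t\downarrow s'$, yet $\le 0$ since $\phi(s')\in K$), so choosing $t_a\in(s',s)$ with $B(\phi(t_a))<B(\phi(s))$ gives a subarc with range in $U(\partial K)\setminus K$ on which $B\circ\phi$ strictly increases, contradicting the sharpened $(\star)$; this recovers and slightly extends the sufficient condition of \cite{9705088}.

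For item (2), along any solution $\phi$ whose range lies in $U(\partial K)\setminus K$ the map $t\mapsto B(\phi(t))$ is locally Lipschitz and, by the Clarke chain rule, $\tfrac{d}{dt}B(\phi(t))\in\langle\partial_C B(\phi(t)),\dot\phi(t)\rangle$ for almost every $t$, so the assumed inequality makes this derivative $\le 0$ a.e., hence $B\circ\phi$ non-increasing, and the main statement applies. For the "if" part of item (1), the condition $\langle\partial_P B(x),F(x)\rangle\subset\mathbb{R}_{\le 0}$ on $U(\partial K)$ is the hypothesis of Clarke's strong-monotonicity (invariance) theorem for a lower semicontinuous function along a locally Lipschitz inclusion, which yields $(\star)$. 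For the "only if" part I would fix $x\in U(\partial K)$, $\zeta\in\partial_P B(x)$ and an arbitrary $\eta\in F(x)$, take a locally Lipschitz selection $f$ of $F$ with $f(x)=\eta$, and use the $C^1$ solution $\phi$ of $\dot\phi=f(\phi)$ with $\phi(0)=x$, which stays in $U(\partial K)$ for small times; the proximal subgradient inequality $B(\phi(h))\ge B(x)+\langle\zeta,\phi(h)-x\rangle-\sigma|\phi(h)-x|^2$ together with $B(\phi(h))\le B(x)$ (from $(\star)$) gives $\langle\zeta,(\phi(h)-x)/h\rangle\le\sigma|\phi(h)-x|^2/h$, and letting $h\downarrow 0$ yields $\langle\zeta,\eta\rangle\le 0$.

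I expect two delicate points. First, the boundary bookkeeping in the continuous-$B$ refinement — localizing the increasing subarc of $B\circ\phi$ inside $U(\partial K)\setminus K$ rather than merely $U(\partial K)$ — is elementary but must be handled carefully. Second, and this is the real crux, the necessity direction in item (1) requires, for each velocity $\eta\in F(x)$, a genuine solution of $\Sigma$ with that initial velocity, hence the existence of a locally Lipschitz selection of $F$ through a prescribed point of $\graph(F)$; this is where local Lipschitzness of $F$, with convex and compact values, is essential. The remaining implications and the reduction to forward invariance are comparatively routine.
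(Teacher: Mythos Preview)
Your argument is correct and follows the paper's route: the same boundary-crossing contradiction for safety, and the same continuity-based refinement when $B$ is continuous. One cosmetic point: in the base case the paper does \emph{not} assume $B$ lower semicontinuous (hence does not use closedness of $K$); it simply picks $t_1$ slightly before the supremum so that $\phi(t_1)\in K$ while $\phi([t_1,t_2])\subset U(\partial K)$, which works for arbitrary $B$. For items (1)--(2) the paper merely invokes \cite[Theorem~6.3]{clarke2008nonsmooth} and \cite[Theorem~3]{Sanfelice:monotonicity}, whereas you unpack them; your unpacking is sound, and for the ``only if'' in (1) you can sidestep the Lipschitz-selection issue you flag by taking the \emph{continuous} selection $f(y):=\mathrm{proj}_{F(y)}(\eta)$ (continuity follows from Hausdorff continuity of $F$ and stability of metric projections onto convex bodies), which via Peano already gives a $C^1$ solution of $\Sigma$ with $\dot\phi(0)=\eta$.
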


\begin{proof}
\blue{   
To find a contradiction,   we assume the existence of  a solution $\phi$ starting from $x_o \in X_{o}$ that reaches the set $X_{u}$ in finite time,  and we note that the (zero-sublevel) set $K$ satisfies $X_o \subset K$ and $X_u \subset \mathbb{R}^n \backslash K$.  Using continuity of $\phi$,  we conclude the existence of $0 \leq t_1 < t_2$ such that $\phi(t_2) \in  U(\partial K) \backslash K$,  $\phi(t_1) \in K$,  and $\phi([t_1,t_2]) \subset U(\partial K)$.    As a result,  having $B(\phi(t_1)) \leq 0$ and $B(\phi(t_2)) > 0$ contradicts \ref{item:star1}.  

When $B$ is continuous,   we conclude that the (zero-sublevel) set $K$ is closed.   Hence,  there exists $0 \leq t_1 < t_2$ such that $\phi(t_2) \in  U(\partial K) \backslash K$,  $\phi(t_1) \in \partial K$,  and $\phi((t_1,t_2]) \subset U(\partial K) \backslash K$.   Now,  having   $t \mapsto B(\phi(t))$ nonincreasing on $(t_1,t_2]$ allows us to conclude,  using continuity of both $B$ and $\phi$,  that $t \mapsto B(\phi(t))$ is nonincreasing on $[t_1,t_2]$.  But at the same time $B(\phi(t_1)) \leq 0$ and $B(\phi(t_2)) > 0$, which yields to a contradiction.   

The proof of the remaining two statements follows  using \cite[Theorem 6.3]{clarke2008nonsmooth} and 
\cite[Theorem 3]{Sanfelice:monotonicity}, respectively.  }

\end{proof}

\begin{lemma}  \label{lemA333}
Consider a set-valued map
$F: \mathbb{R}^n \rightrightarrows \mathbb{R}^n$ satisfying Assumption \ref{ass1} and such that $F$ is continuous.  
Then,  for each  continuous function $\epsilon : \mathbb{R}^n \to \mathbb{R}_{>0}$, there exists a continuous function $\delta : \mathbb{R}^n \to \mathbb{R}_{>0}$ such that
\begin{align} \label{eqcontic}
F(x+\delta(x)\mathbb{B}) \subset F(x)+ \varepsilon (x)\mathbb{B} \qquad 
\forall x \in \mathbb{R}^n. 
\end{align}
\end{lemma}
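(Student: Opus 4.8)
The plan is to prove a local, constant-dilation version of \eqref{eqcontic} around each point, and then glue the local constants into a continuous function, following the same lower-semicontinuous-intermediate-plus-Kat\v{e}tov recipe used elsewhere in the paper (e.g.\ around \eqref{eq0}).

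First I would fix $x_0 \in \mathbb{R}^n$ and produce a positive constant $\delta_0 = \delta_0(x_0)$ and an open neighborhood $V_{x_0}$ of $x_0$ such that $F(x + \delta_0 \mathbb{B}) \subset F(x) + \varepsilon(x)\mathbb{B}$ for \emph{all} $x \in V_{x_0}$. The key subtlety here is that the right-hand side must contain $F(x)$, not $F(x_0)$, so I must bridge between $F(x_0)$ and $F(x)$ — this is exactly where the continuity hypothesis on $F$ (i.e.\ lower semicontinuity, not just Assumption \ref{ass1}) is used. Concretely: by continuity of $\varepsilon$ pick $r_1>0$ with $\varepsilon(y) > \varepsilon(x_0)/2$ for $|y-x_0|<r_1$; by upper semicontinuity of $F$ at $x_0$ pick $r_2>0$ with $F(y) \subset F(x_0) + (\varepsilon(x_0)/8)\mathbb{B}$ for $|y-x_0|<r_2$; and by lower semicontinuity of $F$ at $x_0$ together with compactness of $F(x_0)$ pick $r_3>0$ with $F(x_0) \subset F(y) + (\varepsilon(x_0)/8)\mathbb{B}$ for $|y-x_0|<r_3$. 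Setting $r := \min\{r_1,r_2,r_3\}$, $\delta_0 := r/3$, and $V_{x_0}$ the open ball of radius $r/3$ about $x_0$, one gets for $x \in V_{x_0}$ and $z \in \delta_0 \mathbb{B}$ that $|x+z-x_0| < r$, hence $F(x+z) \subset F(x_0) + (\varepsilon(x_0)/8)\mathbb{B} \subset F(x) + (\varepsilon(x_0)/4)\mathbb{B} \subset F(x) + \varepsilon(x)\mathbb{B}$, and taking the union over $z$ gives the local claim.

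Next I would globalize. Define $\tilde\delta : \mathbb{R}^n \to \mathbb{R}_{>0}$ by $\tilde\delta(x) := \sup\{\delta_0(x_0) : x \in V_{x_0}\}$; since $x \in V_x$, this is positive, and it is finite because each $\delta_0(x_0) \le r/3 \le$ the corresponding $r_1$ etc.\ (or one simply caps $\delta_0$ by $1$ at no cost). This $\tilde\delta$ is lower semicontinuous: whenever $x \in V_{x_0}$, the open set $V_{x_0}$ is a neighborhood of $x$ on which $\tilde\delta \ge \delta_0(x_0)$, so $\liminf_{x'\to x}\tilde\delta(x') \ge \delta_0(x_0)$, and taking the supremum over all admissible $x_0$ yields $\liminf_{x'\to x}\tilde\delta(x') \ge \tilde\delta(x)$. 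Applying \cite[Theorem 1]{katvetov1951real} to the lower semicontinuous positive function $\tilde\delta/2$ produces a continuous $\delta:\mathbb{R}^n\to\mathbb{R}_{>0}$ with $\delta(x) \le \tilde\delta(x)/2 < \tilde\delta(x)$ for all $x$. Finally, for fixed $x$, the strict inequality $\delta(x) < \tilde\delta(x) = \sup\{\delta_0(x_0):x\in V_{x_0}\}$ provides some $x_0$ with $x \in V_{x_0}$ and $\delta_0(x_0) > \delta(x)$; by the local claim, $F(x+\delta(x)\mathbb{B}) \subset F(x+\delta_0(x_0)\mathbb{B}) \subset F(x) + \varepsilon(x)\mathbb{B}$, which is \eqref{eqcontic}.

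I expect the main obstacle to be the local claim, specifically arranging that a single constant dilation chosen at $x_0$ continues to work against the \emph{varying} value $\varepsilon(x)$ at nearby points while keeping $F(x)$ (rather than $F(x_0)$) on the right — this is what forces the use of lower semicontinuity of $F$ and the slack factors $\tfrac18,\tfrac14,\tfrac12$ together with the radius shrinkage to $r/3$. The globalization is then essentially bookkeeping, but one must be slightly careful to obtain the \emph{strict} inequality $\delta < \tilde\delta$ from Kat\v{e}tov's theorem so that the supremum in the definition of $\tilde\delta$ is genuinely exceeded, making the choice of $x_0$ in the last step legitimate.
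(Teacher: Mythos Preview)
Your proof is correct and follows the same three-step skeleton as the paper's proof of Lemma~\ref{lemA333}: establish a local uniform version of \eqref{eqcontic}, patch the local data into a strictly positive lower semicontinuous function, and then invoke \cite[Theorem 1]{katvetov1951real} to extract a continuous minorant. The difference lies in the localization. The paper grids $\mathbb{R}^n$ by a locally finite family of compact sets $\{I_i\}$, appeals to local uniform continuity of $F$ on each $I_i$ to obtain a single $\delta_i$ working against $\varepsilon_i := \min_{x\in I_i}\varepsilon(x)$, and then sets $\delta_o(x):=\min\{\delta_i : x\in I_i\}$; the min over a locally finite family is automatically lower semicontinuous and no strict inequality is needed downstream. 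You instead work with open balls $V_{x_0}$ indexed by every point, build the local estimate by hand from three radii (continuity of $\varepsilon$, upper semicontinuity of $F$, and lower semicontinuity of $F$ with compact values), and define $\tilde\delta$ as a \emph{sup} over the covering balls; this forces you to secure the strict inequality $\delta<\tilde\delta$ via the $\tilde\delta/2$ trick so that the sup is genuinely attained up to $\delta(x)$. Your route is more explicit about exactly where lower semicontinuity of $F$ enters, while the paper's route hides this inside the phrase ``locally uniformly continuous'' and avoids the strict-inequality bookkeeping by exploiting the locally finite grid.
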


\begin{proof}
As in the proof of Proposition \ref{lemA444},   we grid $\mathbb{R}^n$ using a sequence of nonempty compact subsets $\{I_i\}_{i=1}^{N} \subset \mathbb{R}^n$, where $N \in \{1,2,3, \dots ,\infty \}$, such that,  for each $i \in \{1,2,...,N\}$,  there exists 
$\mathcal{N}_i \subset \{1,2,...,N \}$  finite such that 
$ I_i \cap I_j = \emptyset$  for all  $j \notin \mathcal{N}_i$,  and \eqref{eqchhh2} holds. 

Since $F$ is continuous on each $x \in I_i$,  $i \in \{1,2,...,N\}$, then, for each $\varepsilon_i > 0$, there exists $\delta >0$ such that 
\begin{align}
    F(x+\delta \mathbb{B}) \subset F(x)+ \varepsilon_i \mathbb{B}.
\end{align}
Furthermore,  since every continuous $F$ is locally uniformly continuous,  we conclude the existence of $\delta_i > 0$ such that 
\begin{align}
    F(x+\delta_i \mathbb{B}) \subset F(x)+ \varepsilon_i \mathbb{B} \qquad \forall x \in I_i.
\end{align}
Now,  if we let $\varepsilon_i := \min_{x \in I_i} \varepsilon(x)$,  we conclude that 
\begin{align}
F(x+\delta_i \mathbb{B}) \subset F(x)+ \varepsilon(x) \mathbb{B} \qquad \forall x \in 
I_i.  
\end{align}

Next, we introduce the function $\delta_o :\mathbb{R}^n \to \mathbb{R}_{>0}$ given by
$$\delta_{o}(x) :=\text{min}\{\delta_i: i\in \{1,2,\dots , N\} \;\ \text{such that} \;\ x\in I_i\} $$

By definition,  $\delta_{o}$ is lower semi-continuous; hence, using Lemma \ref{lemA3} in Appendix II to conclude the existence of the function $\delta : \mathbb{R}^n \rightarrow \mathbb{R}_{>0}$ continuous and satisfying 
\begin{align}
\delta(x) \leq \delta_{o}(x) \qquad  \forall x \in \mathbb{R}^n.
\end{align}
Hence, \eqref{eqcontic} follows. 
\end{proof}

\begin{lemma} \label{lemA3}
Consider a closed subset $K \subset \mathbb{R}^n$ and a set-valued map $G : K \rightrightarrows \mathbb{R}_{<0}$ that is upper semicontinuous with $G(x)$ nonempty, compact, and convex for all 
$x \in K$. Then, there exists a continuous function $g : K \rightarrow \mathbb{R}_{<0}$ such that  
\begin{align} \label{eqselect1}
0 > g(x) \geq \sup \{\eta: \eta \in G(x)\} \qquad 
\forall x \in K.  
\end{align}
\end{lemma}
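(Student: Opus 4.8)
The plan is to reduce the set-valued statement to a scalar one. Define $m : K \to \mathbb{R}$ by $m(x) := \max\{\eta : \eta \in G(x)\}$; this is well defined because $G(x)$ is nonempty and compact, and $m(x) < 0$ for all $x \in K$ since $G(x) \subset \mathbb{R}_{<0}$. The whole lemma then amounts to producing a continuous $g : K \to \mathbb{R}$ with $m(x) \le g(x) < 0$ for all $x \in K$, and \eqref{eqselect1} follows.

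First I would show that $m$ is upper semicontinuous on $K$. Since $G$ is upper semicontinuous with compact images, it is outer semicontinuous and locally bounded (as recalled in the Remark following Assumption \ref{ass1}). Given $x \in K$ and a sequence $x_i \to x$ in $K$, pass to a subsequence realizing $\limsup_i m(x_i)$; the points $\eta_i := m(x_i) \in G(x_i)$ are bounded by local boundedness of $G$ near $x$, so along a further subsequence $\eta_i \to \eta^\ast$, and outer semicontinuity yields $\eta^\ast \in G(x)$, hence $\limsup_i m(x_i) = \eta^\ast \le m(x)$. Thus $m$ is upper semicontinuous, so $\ell := -m : K \to \mathbb{R}_{>0}$ is lower semicontinuous and strictly positive.

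Next I would build an explicit continuous, strictly positive minorant of $\ell$ by inf-convolution with the distance function:
\[
c(x) := \inf_{y \in K}\big(\ell(y) + |x - y|\big), \qquad x \in K .
\]
Taking $y = x$ in the infimum gives $c(x) \le \ell(x)$, and the triangle inequality gives $|c(x) - c(x')| \le |x - x'|$, so $c$ is $1$-Lipschitz and in particular continuous. The key point is $c(x) > 0$: the map $y \mapsto \ell(y) + |x - y|$ is lower semicontinuous and tends to $+\infty$ as $|y| \to \infty$, while $K$ is closed in $\mathbb{R}^n$; hence the infimum is attained at some $y^\ast \in K$, and $c(x) = \ell(y^\ast) + |x - y^\ast| \ge \ell(y^\ast) > 0$.

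Finally, set $g := -c$. Then $g$ is continuous, $g(x) < 0$ for all $x \in K$, and $g(x) = -c(x) \ge -\ell(x) = m(x) = \sup\{\eta : \eta \in G(x)\}$, which is exactly \eqref{eqselect1}. The one delicate step is the strict inequality $g(x) < 0$, equivalently $c(x) > 0$: it rests entirely on the infimum defining $c$ being attained, which is where closedness of $K$ in $\mathbb{R}^n$ and the coercivity of $y \mapsto |x - y|$ are used. (Note that convexity of the images of $G$ plays no role; only nonemptiness, compactness, and upper semicontinuity are needed.)
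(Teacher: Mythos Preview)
Your proof is correct. Both your argument and the paper's follow the same two-step strategy: pass to the scalar marginal function $m(x) = \sup G(x)$, show it is upper semicontinuous, and then produce a continuous function sandwiched between $m$ and $0$. The difference is in execution. The paper dispatches both steps by citation---\cite[Theorem~1.4.16]{aubin2009set} for upper semicontinuity of the marginal function, and Kat\v{e}tov's insertion theorem \cite{katvetov1951real} for the existence of a continuous $g$ with $m \le g < 0$---whereas you supply explicit, self-contained arguments: a direct sequential proof of upper semicontinuity via outer semicontinuity and local boundedness, and an inf-convolution (Pasch--Hausdorff envelope) construction $c(x)=\inf_{y\in K}(\ell(y)+|x-y|)$ for a continuous positive minorant of $\ell=-m$, where closedness of $K$ and coercivity of $|x-\cdot|$ guarantee the infimum is attained and hence strictly positive. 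Your route is more elementary and makes the lemma independent of the cited literature; the paper's is terser but defers the substance to the references. Your remark that convexity of $G(x)$ is unused is also correct and applies equally to the paper's proof.
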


\begin{proof}
We start introducing the function 
$f : K \rightarrow \mathbb{R}_{<0}$ given by
$f(x) := \sup \{\eta: \eta \in G(x)\}$. 
\textcolor{blue}{Using \cite[Theorem 1.4.16]{aubin2009set}},  we conclude that $f$ is upper semicontinuous. Finally,  \textcolor{blue}{using  \cite[Theorem 1]{katvetov1951real}},  we conclude that there exists  $g : K \rightarrow \mathbb{R}_{<0}$ continuous such that $g(x) \geq f(x)$ for all $x \in K$.
\end{proof}

\bibliography{biblio}
\bibliographystyle{ieeetr}

\vskip 6pt

\begin{wrapfigure}{l}{18mm}
  \vskip -15pt
  \includegraphics[scale=.2]{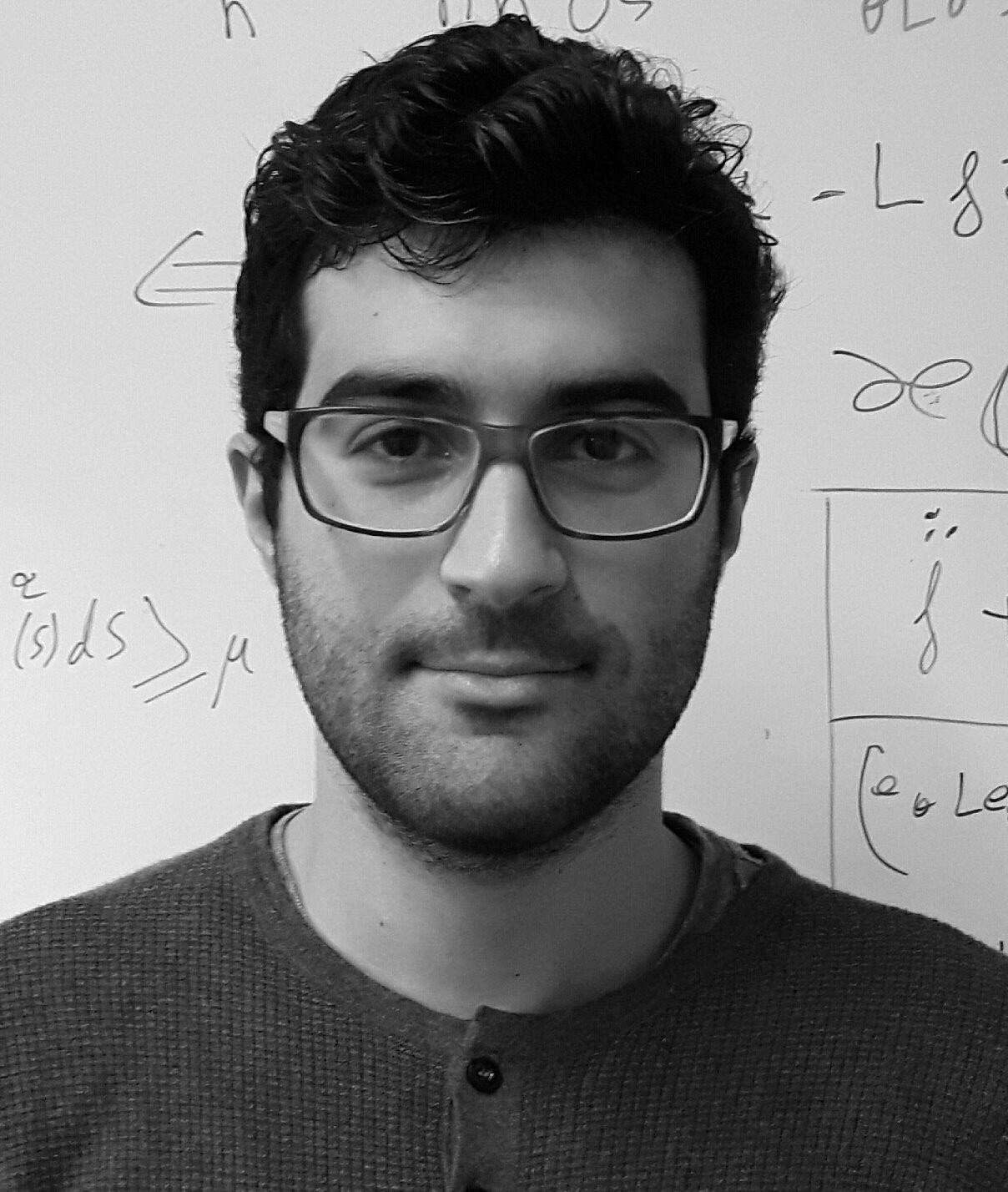} \vskip -1pt
\end{wrapfigure}\par
\footnotesize
\noindent \ \\[-4mm] 
Mohamed Maghenem received his Control-Engineer degree from the Polytechnical School of Algiers, Algeria, in 2013, his  PhD degree on Automatic Control from the University of Paris-Saclay, France, in 2017.  He was a Postdoctoral Fellow at the Electrical and Computer Engineering Department at the University of California at Santa Cruz from 2018 through 2021. M. Maghenem holds a research position at the French National Centre of Scientific Research (CNRS) since January 2021.  His research interests include control systems theory (linear,  non-linear,  and hybrid) to ensure (stability, safety, reachability, synchronisation, and robustness); with applications to mechanical systems,  power systems,  cyber-physical systems,  and some partial differential equations.

\vskip 6pt

\begin{wrapfigure}{l}{22mm}
  \vskip -15pt
 \includegraphics[scale=.1]{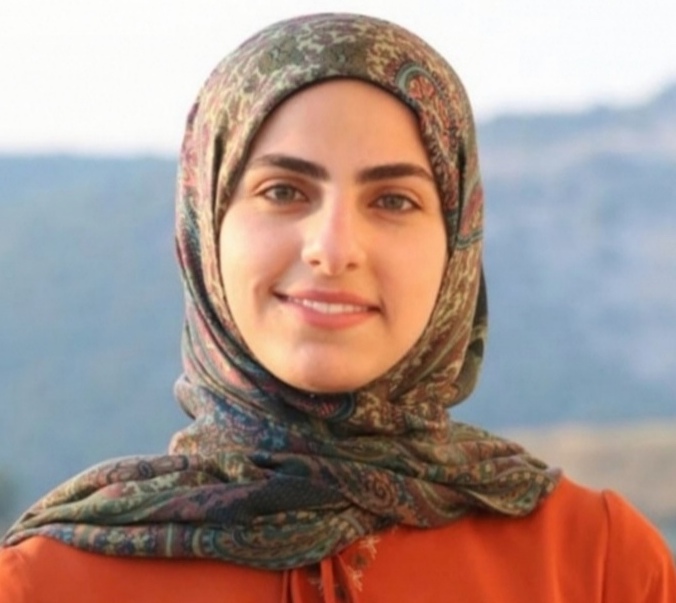}     \vskip -1pt
\end{wrapfigure} 
\par
\footnotesize
\noindent \ \\  [-4mm] 
Diana Karaki received her bachelor and masters I degrees in  mathematics as well as her masters II degree in computer science in 2019, 2020,  and 2021, respectively,  from the Lebanese University,  Lebanon.  She obtained another masters II degree in applied mathematics in 2022 from Limoges University,  France.
She is currently pursuing a research internship at GIPSA-lab, Grenoble, France.


\end{document}